\newcommand{\Stab}{\operatorname{Stab}}
\newcommand{\nor}{\trianglelefteq}         
\newcommand{\abs}[1]{\left\vert#1\right\vert}         
\newcommand{\set}[1]{\left\{#1\right\}}               
\newcommand{\seq}[1]{\left<#1\right>}                 
\newcommand{\PGL}{{\operatorname{PGL}}}
\newcommand{\PGammaL}{{\operatorname{P\Gamma L}}}
\newtheorem{thm}{Theorem}
\newtheorem{lem}{Lemma}
\newtheorem{pro}{Proposition}
\newtheorem{defn}{Definition}
\begin{document}
\title{On the primary 
coverings of finite solvable and symmetric groups}
\date{\today}
\author{Francesco Fumagalli}
\thanks{Francesco Fumagalli. Dipartimento di Matematica e Informatica ``Ulisse Dini'',
         viale Morgagni 67/A, 50134 Firenze, Italy. francesco.fumagalli@unifi.it. Member of INDAM.}

\author{Martino Garonzi}
\thanks{Martino Garonzi. Universidade de Bras\'{i}lia, 
Campus Universit\'{a}rio Darcy Ribeiro, 
Departamento de Matem\`{a}tica
Bras\'{i}lia - DF
70910-900, Brasil. mgaronzi@gmail.com.
Supported by Funda\c{c}\~{a}o de Apoio \`a Pesquisa do Distrito Federal (FAPDF) - demanda espont\^{a}nea 03/2016, and by 
Conselho Nacional de Desenvolvimento Cient\'ifico e Tecnol\'ogico (CNPq) - Grant numbers 302134/2018-2, 422202/2018-5.} 


\begin{abstract}
A primary covering of a finite group $G$ is a family of proper 
subgroups of $G$ whose union contains the set of elements of $G$ 
having order a prime power. We denote with $\sigma_0(G)$ 
the smallest size of a primary covering of $G$, and call it the primary
covering number of $G$. We study this number and compare it with 
its analogous $\sigma(G)$, the covering number, for the classes of 
groups $G$ that are solvable and symmetric.  
\end{abstract}

\maketitle

\section{Introduction}\label{sec:intro}
A \emph{covering} of a finite group $G$ is a family of 
proper subgroups of $G$ whose union equals $G$. The 
\emph{covering number} of $G$ is defined as the minimal size 
of a covering of $G$, and it is denoted by $\sigma(G)$ (by Cohn \cite{Cohn}). 
A group admits a covering unless it is cyclic, in which case 
it is generally set $\sigma(G)$ to be equal to $\infty$ 
(with the convention that $n<\infty$ for every integer $n$). 
The covering number has been studied by many authors and 
in particular $\sigma(G)$ was determined when $G$ is solvable 
by Tomkinson \cite[Theorem 2.2]{Tom} and when $G$ is symmetric 
by Mar\'oti \cite[Theorem]{Maroti2005} (see also \cite{Swartz}, 
\cite{Kappe} and \cite{Opp}).

Given a subset $\Pi$ of $G$ we may be interested in the minimal 
number of proper subgroups of $G$ whose union contains $\Pi$. 
In this paper we focus on the set of primary elements, complementing 
the work done in \cite{Fuma}. A \emph{primary element} of $G$ is an 
element of $G$ whose order is some prime power. 
We define $G_0$ to be the set of primary elements of $G$ and a  
\emph{primary covering} of $G$ to be a family 
of proper subgroups of $G$ whose union contains $G_0$. We set 
$\sigma_0(G)$ the smallest size of a primary covering of $G$.
and call it the \emph{primary covering number of $G$}. 
Observe that $G$ admits primary coverings if and only if 
$G$ is not a cyclic $p$-group for no prime $p$, so in this case 
we define $\sigma_0(G) = \infty$, with the convention that 
$n < \infty$ for every integer $n$. Clearly, we always have 
$\sigma_0(G) \leq \sigma(G)$. Moreover, a deep result 
(\cite[Theorem 1]{Kantor}) shows that a primary covering 
of any finite group is never a unique conjugacy class of a 
proper subgroup.

In this paper we study $\sigma_0(G)$ when $G$ is solvable and 
when $G$ is a symmetric group $S_n$.

Our main result about solvable groups is the following.

\begin{thm}\label{thm:solvable}
Let $G$ be a finite solvable group which is not a cyclic $p$-group, for every prime $p$. If $G/G'$ is not a $p$-group, then $\sigma_0(G)=2$. Otherwise,
$\sigma_0(G)=\sigma(G)$.
\end{thm}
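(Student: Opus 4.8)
The plan is to treat the two cases separately, after recording the general lower bound $\sigma_0(G)\ge 2$. This bound holds for every finite group that admits a primary covering: the Sylow subgroups of $G$ consist of primary elements and generate $G$, so $G_0$ generates $G$ and no single proper subgroup can contain $G_0$.

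For the first case, assume $A:=G/G'$ is not a $p$-group for any prime $p$. Fix a prime $p$ dividing $\abs{A}$; since $\abs{A}$ is divisible by a second prime, in the decomposition $A=A_p\times A_{p'}$ into $p$-part and $p'$-part both factors are nontrivial. Let $H_1$ and $H_2$ be the preimages in $G$ of $A_{p'}$ and $A_p$ respectively. Both are proper, since $A_p\ne 1$ forces $H_1\ne G$ and $A_{p'}\ne 1$ forces $H_2\ne G$. Given a primary element $g$ of order $r^k$, its image in $A$ has order dividing $r^k$, hence lies in $A_r$; thus $g\in H_2$ when $r=p$ and $g\in H_1$ when $r\ne p$. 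Therefore $\set{H_1,H_2}$ is a primary covering and $\sigma_0(G)=2$, the reverse inequality being already noted. The point of the construction is that the two primes dividing $\abs{A}$ let us split the primary elements according to the prime dividing their order; this is exactly what fails when $A$ is a $p$-group.

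For the second case, assume $A=G/G'$ is a $p$-group. As $\sigma_0(G)\le\sigma(G)$ always holds, it remains to prove $\sigma_0(G)\ge\sigma(G)$. By Tomkinson's theorem $\sigma(G)=q^b+1$, where $q^b$ is the order of the smallest chief factor of $G$ having more than one complement, and this value is already attained on a primitive quotient $\bar G=G/C$ of the form $\bar G=V\semid M$, with $V$ elementary abelian of order $q^b$, self-centralizing, and $M\le\GL(b,q)$ acting faithfully and irreducibly. I would first transfer the hypothesis: being a quotient of the $p$-group $A$, the group $\bar G/\bar G'$ is a $p$-group, and since irreducibility gives $[V,M]=V\le\bar G'$ one gets $\bar G/\bar G'\cong M/M'$, so $M^{ab}$ is a nontrivial $p$-group. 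The plan is then to reduce to $\bar G$ and establish $\sigma_0(\bar G)\ge q^b+1$. Here $\sigma_0(G)\le\sigma_0(\bar G)$ is immediate by pull-back, while the reverse inequality, which is the one needed, follows by lifting each primary element of $\bar G$ to a primary (Sylow) element of $G$, once one verifies that the relevant images $H_iC/C$ remain proper.

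The heart of the argument, and the main obstacle, is this lower bound for the primitive quotient. The mechanism I would exploit is that no nontrivial element of $V$ lies in any complement: a direct computation gives $M^w=\set{((\ell-1)w,\ell):\ell\in M}$, so $(v,1)\in M^w$ forces $v=0$. Hence the $q^b-1$ primary elements of $V\setminus\set{1}$ must be covered by subgroups meeting $V$ nontrivially, while the remaining primary elements, lying in the $q^b$ conjugates of $M$, must be covered by the conjugates themselves or by subgroups containing $V$. The delicate point is to show that the single prime dividing $M^{ab}$ forbids any economical cover: unlike in the first case, the primary elements of the complements cannot be split off by a bounded number of subgroups, so essentially all $q^b$ point stabilizers, together with one subgroup absorbing $V$, are forced. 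Carrying this out requires careful bookkeeping of how the primary elements of $M$ distribute among the proper subgroups of $\bar G$, controlling, for a $p$-element $\pi\in M$ with nontrivial image in $M^{ab}$, the number of conjugates $M^w$ containing a given conjugate of $\pi$, which equals $q^{\dim_{\FF_q}\Fix{\pi}}$, and comparing this against Tomkinson's combinatorial lower bound. I expect this distribution analysis, together with the need to handle the interaction between $p$ and $q$, in particular the unipotent case $p=q$ where $\Fix{\pi}\ne 0$, to be the technically hardest step.
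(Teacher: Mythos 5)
Your first case is correct and is essentially the paper's own argument (Lemma \ref{lem:solvable_0}(1)): split $G/G'$ into its $p$-part and $p'$-part and take preimages. The problem is the second case, whose reduction rests on a structural claim that is false: namely, that Tomkinson's value is always attained on a primitive quotient $\bar G=V\semid M$ with $M\neq 1$ acting faithfully and irreducibly on $V$. Take $G=\seq{z}\times S_3$ with $z$ of order $2$. Then $G/G'\cong C_2\times C_2$ is a $2$-group, so $G$ falls under this case, and the smallest chief factor with more than one complement is the central factor $\seq{z}$: its complements are $\set{1}\times S_3$ and the diagonal copy $\set{(z^{\epsilon(\sigma)},\sigma)\co \sigma\in S_3}$, where $\epsilon$ is the sign, so $\sigma(G)=3$ by Tomkinson. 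But both complements are normal in $G$ (they are kernels of homomorphisms $G\to C_2$), so factoring out the core of either one yields the cyclic group $C_2$, not a primitive group of your form; the only primitive quotient of $G$ with nontrivial point stabilizer is $S_3$, and $\sigma(S_3)=4\neq 3$. (Even simpler: $G=C_p\times C_p$, all of whose proper quotients are cyclic.) Thus precisely when the minimal chief factor with more than one complement is central --- the situation in which the hypothesis that $G/G'$ is a $p$-group has to do its work --- there is no quotient of your form to reduce to, and the argument cannot start.

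There are two further gaps even when the desired primitive quotient exists. First, the inequality you need, $\sigma_0(\bar G)\le\sigma_0(G)$, is not a general fact: pushing a minimal primary covering $\set{H_i}$ of $G$ forward to $\set{H_iC/C}$ requires $H_iC\neq G$ for every $i$, and this properness, which you flag but do not establish, can genuinely fail; for instance $\sigma_0(C_6)=2$ while $\sigma_0(C_6/C_3)=\sigma_0(C_2)=\infty$. Second, the step you yourself call the heart of the proof, the bound $\sigma_0(V\semid M)\ge \abs{V}+1$, is never proved: your fixed-point computation (a conjugate $M^w$ contains $\pi\in M$ exactly when $w\in\Fix{\pi}$, giving $q^{\dim\Fix{\pi}}$ such conjugates) is correct, but the bookkeeping that would turn it into the lower bound is explicitly deferred, so the main inequality remains unestablished. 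For contrast, the paper avoids both problems: it chooses a normal subgroup $N$ with $\sigma_0(G)=\sigma_0(G/N)$ and $\abs{G/N}$ minimal, so that no push-forward is ever needed; it uses Lemma \ref{lem:solvable_0}(3) to force every complement of a minimal normal subgroup $K/N$ of $G/N$ into every minimal primary covering; and it then applies Gasch\"{u}tz's complement-counting formula (Lemma \ref{lem:solvable_formula}): if $K/N$ has $b>1$ complements then $b\ge\abs{K/N}\ge\alpha$, whence $\sigma_0(G)\ge 1+\alpha=\sigma(G)$, while the case $b=1$ --- exactly your troublesome central case --- is excluded because $G/G'$ being a $p$-group forces a second non-Frattini central chief factor of order $p$ and hence $b\ge p$ by the same formula. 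To repair your proposal you would need both to handle central chief factors separately and to actually carry out the counting in the primitive case.
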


Our results on the primary covering number for $S_n$ can be summarized 
in the following statement.
\begin{thm}\label{thm: main S_n}
The following hold for $n\geq 3$.
\begin{enumerate}
\item $\sigma_0(S_3)=4$ and $\sigma_0(S_6)=7$ 
(see Lemmas \ref{lem:S_5} and \ref{lem:S_6}).
\item If $n=2^a$ for some $a>1$, then $\sigma_0(S_n)= 1 +  
\frac{1}{2}{n\choose n/2}$ (see Proposition \ref{pro:S_n n=2power}).
\item If $n\neq 3^{\epsilon}2^a$, for $\epsilon\in\set{0,1}$ 
and $a>1$, then $\sigma_0(S_n)= 1+ {n\choose n_2}$, where $n_2$ denotes 
the maximum power of $2$ that divides $n$ (see Proposition \ref{pro:S_n, n_not_32^a} 
and Lemma \ref{lem:S_10}). 
\item If $n=3\cdot 2^a$, with $a\geq 2$, then 
$c_1\leq \sigma_0(S_n)\leq c_2,$
where 
\begin{align*}
c_1 &=\begin{cases} 117 & \textrm{ if } a=2,\\
                 1+{n-1\choose 2^{a}-1} 
                 & \textrm{ if } a\geq 3,
      \end{cases}\\ 
c_2& = 2+{n-1\choose 2^{a}-1}+
\sum_{i=2}^{2^{a+1}}{n-i\choose 2^{a-1}-1}
\end{align*}
(see Proposition \ref{pro:main 32^a}).
\end{enumerate}
\end{thm}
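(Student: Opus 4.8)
The plan is to sort the primary elements of $S_n$ by parity and reduce the whole computation to covering the odd ones. The decisive elementary observation is that a permutation has prime-power order $p^k$ exactly when all its cycle lengths are powers of $p$; if $p$ is odd every such cycle has odd length, so the permutation is even. Hence every odd primary element has $2$-power order, and these are precisely the permutations all of whose cycle lengths are powers of $2$ with an odd number of nontrivial cycles. Since $A_n$ is a maximal subgroup containing all even permutations, a single copy of $A_n$ covers at once every odd-order primary element together with the even $2$-power elements; this is the source of the additive constant $1$ in items (2)--(4). I would therefore reduce the problem to covering the odd $2$-power permutations, and on the lower-bound side treat the extra unit as a constraint imposed by the even part (e.g. the $3$-cycles, which generate $A_n$, so that $A_n$ is the unique proper subgroup containing all of them).

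For the upper bounds I would write down explicit coverings governed by the $2$-part $n_2$ of $n$. In the generic case (item (3)) I would use the $\binom{n}{n_2}$ set-stabilizers $S_{n_2}\times S_{n-n_2}$, after establishing the key combinatorial fact that every odd $2$-power permutation stabilizes some $n_2$-subset. This amounts to finding a subcollection of its cycles (counting fixed points as cycles of length $1$) of total length exactly $n_2$: if some cycle has length $\le n_2$ one fills up to $n_2$ greedily using that powers of two tile, while if every nontrivial cycle has length $\ge 2n_2$ then $2n_2$ divides $n-f$, where $f$ is the number of fixed points, and since $n\equiv n_2 \pmod{2n_2}$ this forces $f\ge n_2$, so $n_2$ fixed points already do the job. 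Together with $A_n$ this yields a covering of size $1+\binom{n}{n_2}$. When $n=2^a$ (item (2)) the critical odd elements include the transitive $n$-cycles, which lie in no intransitive subgroup; here I would instead use the $\tfrac12\binom{n}{n/2}$ imprimitive subgroups $S_{n/2}\wr S_2$ preserving a halving of the point set, exploiting that an $n$-cycle with $n=2^a$ preserves exactly the block system given by the two cycles of its square.

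The lower bounds are the heart of the matter and where the main obstacle lies. For each case I would produce a large \emph{scattered} family $X$ of odd $2$-power elements together with one even element and argue that $|X|$ forces the stated count; one cannot expect a perfect independent set (two $n_2$-cycles on disjoint supports already share a proper subgroup), so a weighted or fractional counting over the incidences between $X$ and proper subgroups is needed. By the classification of maximal subgroups of $S_n$ this splits into the intransitive, imprimitive, and primitive types; the intransitive and imprimitive contributions are bounded by direct combinatorics, and the crux is the primitive case, where I would invoke the strong restrictions on primitive groups containing a permutation with few cycles or many fixed points (Jordan-type theorems and the bounds underlying Mar\'oti's determination of $\sigma(S_n)$) to show that primitive subgroups cannot lower the count. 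Kantor's theorem, quoted in the introduction, is used here to exclude coverings concentrated in a single conjugacy class.

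Finally I would dispose of the exceptional and boundary values separately. The small cases $n=3,6$ (item (1)) are checked directly, with $S_6$ needing extra care because of its exceptional outer automorphism and additional primitive subgroups. For $n=3\cdot 2^a$ with $a\ge 2$ (item (4)) the prime factor $3$ spawns several competing near-optimal families---imprimitive systems with three blocks of size $2^a$, with two blocks of size $3\cdot 2^{a-1}$, and the $2^a$-set stabilizers---which I do not expect to reconcile into a single exact identity; here I would settle for the explicit layered covering giving the upper bound $c_2$ and a counting lower bound over one distinguished subfamily giving $c_1$, with the residual case $a=2$, i.e.\ $n=12$, pinned down to $117$ by computation.
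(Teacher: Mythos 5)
Your overall architecture does match the paper's (reduce via $A_n$ to covering the odd $2$-elements, cover those by $n_2$-set stabilizers, or by the halving stabilizers $S_{n/2}\wr S_2$ when $n=2^a$, then fight the lower bound through the intransitive/imprimitive/primitive trichotomy), but the lower bound --- which you yourself call the heart of the matter --- is left as a plan with the decisive idea missing. The paper's method (Mar\'oti's ``definitely unbeatable'' families) hinges on choosing one specific conjugacy class $\Pi$: the cycle type given by the binary expansion $n=2^{a_1}+\cdots+2^{a_t}$, with the top part split in two when needed to make the permutations odd. Because no subfamily of these parts other than $\{2^{a_t}\}$ sums to $n_2$, each element of $\Pi$ lies in \emph{exactly one} member of $\mathcal{X}_{n_2}$, so $\mathcal{X}_{n_2}$ partitions $\Pi$, and the lower bound reduces to the sharp inequality $\abs{M\cap\Pi}<\abs{X_{n_2}\cap\Pi}$ for every other maximal $M$; proving that requires an explicit numerical lemma comparing $2^{s+1}\binom{n}{n_2}$ with $\binom{n}{\lfloor n/2\rfloor}$ together with Mar\'oti's bounds $\abs{P}<3^n$ (resp. $2^n$) on proper primitive groups, and for $n=2^a$ it requires Jones's classification of primitive groups containing an $n$-cycle (only $\PGL_2(q)$ with $q=2^a-1$ a Mersenne prime survives). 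None of this is supplied by your ``weighted or fractional counting'' sketch. Moreover your justification of the extra $+1$ is wrong: $3$-cycles \emph{do} lie in $n_2$-set stabilizers and in halving stabilizers (put the support inside one block), so they impose no constraint at all. The correct witnesses are fixed-point-free primary elements --- for $n=2^a$ the $p$-cycles with $n/2<p<n$ via Bertrand's postulate, in general Kantor's theorem or elements such as those of type $(3,3,\ldots,3)$ --- combined with the fact that strong unbeatability forces any covering of $\Pi$ of minimal size to consist of the unbeatable family itself.

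A second genuine gap: the exceptional values $n=5$ and $n=10$, both covered by item (3) of the theorem, are nowhere addressed, and the generic argument provably fails there: for $n=5$ the Frobenius group $5{:}4$ meets the class $(4,1)$ in $10$ elements against $6$ for a point stabilizer, and for $n=10$ the subgroups of $\mathcal{W}_5$ meet $(4,4,2)$ in $1800$ elements against $1260$ for $\mathcal{X}_2$, so $\mathcal{X}_{n_2}$ is beaten in both cases. The paper needs a separate Diophantine-system argument for $S_5$ and an integer-programming computation (GAP plus Gurobi) to get $\sigma_0(S_{10})=46$. Relatedly, you place the computer assistance at $n=12$; in the paper $n=12$ is done by hand (each member of $\mathcal{W}_6$ contains a fraction $8(6!)^2/12!$ of the class $(4,4,4)$, and $12!/(8(6!)^2)=115.5$ forces at least $116+1=117$ subgroups), and the computation is needed precisely at $n=10$. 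Without these case analyses, items (3) and (4) as stated are not established.
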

As a comparison with the work done in \cite{Fuma}, observe the following. Denoting by $\gamma_0(G)$ the normal primary covering number of a finite group $G$, that is the smallest natural number such that
$$G_0\subseteq \bigcup_{i=1}^{\gamma_0(G)}\bigcup_{g\in G} H_i^g$$
for some proper - pairwise non-conjugate - subgroups $H_i$ of $G$, 
by Propositions 1 and 5 in \cite{Fuma} we have that if $G$ is a group whose order is not a prime power and $G$ is either solvable or symmetric then $\gamma_0(G)=2$.

%
%
%
%
\section{Solvable groups}\label{sec:solvable}
In this section we determine the primary covering number for every finite
solvable group $G$. For the basic results, as well as the notation, concerning  
solvable groups we refer the reader to \cite[Chapter 5]{Robinson}.

We start with the following trivial observations that hold in a general 
context.
\begin{lem}\label{lem:solvable_0}
Assume that $G$ is a finite group.
\begin{enumerate}
\item If $G/G'$ is not a $p$-group, for some prime $p$, 
then $\sigma_0(G)=2$.
\item If $N\nor G$, then $\sigma_0(G) \leq \sigma_0(G/N)$. Moreover, 
if $N$ is contained in the Frattini subgroup of $G$, then 
$\sigma_0(G)=\sigma_0(G/N)$. 
\item If $M$ is a maximal subgroup of $G$ such that 
$\sigma_0(M) > \sigma_0(G)$, then $M$ belongs to every minimal primary 
covering of $G$.
\end{enumerate}
\end{lem}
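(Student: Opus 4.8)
The plan is to prove the three parts essentially independently, deducing (1) from (2) to avoid duplicating the pull-back argument. Throughout I will use the basic fact that $G$ is generated by its primary elements: any $g\in G$ of order $n=\prod_i p_i^{a_i}$ lies in the cyclic group $\langle g\rangle$, which is the direct product of its Sylow subgroups, each generated by a suitable power of $g$, so $g$ is a product of primary elements. Hence $\langle G_0\rangle=G$, and consequently no single proper subgroup can contain $G_0$; this already gives $\sigma_0(G)\geq 2$ whenever $G$ admits a primary covering.

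For part (2), the first inequality is the standard pull-back. Given a primary covering $\{H_1/N,\dots,H_k/N\}$ of $G/N$ of size $k=\sigma_0(G/N)$ (the case $\sigma_0(G/N)=\infty$ being trivial), I would observe that each preimage $H_i$ is a proper subgroup of $G$ containing $N$, and that for a primary element $g\in G_0$ the coset $gN$ has prime-power order in $G/N$, hence lies in some $H_i/N$, so $g\in H_i$. Thus $\{H_1,\dots,H_k\}$ is a primary covering of $G$ and $\sigma_0(G)\leq \sigma_0(G/N)$. For the reverse inequality under $N\leq\Phi(G)$, I would start from a minimal primary covering of $G$ and replace each member by a maximal subgroup containing it; this preserves both the covering property and the size, so I may assume the covering consists of maximal subgroups $K_1,\dots,K_m$ with $m=\sigma_0(G)$. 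Since every maximal subgroup contains $\Phi(G)\supseteq N$, each quotient $K_i/N$ is a well-defined proper subgroup of $G/N$. The crucial step is a lifting claim: every primary element $\bar g\in (G/N)_0$ is the image of a primary element of $G$ lying in the same coset. To see this, take a Sylow $p$-subgroup $P$ of $\langle g\rangle N$, where $p$ is the prime with $\bar g$ of $p$-power order; then $PN/N$ equals $\langle \bar g\rangle$, so some $p$-element $x\in P$ satisfies $xN=\bar g$. As $x\in G_0$, it lies in some $K_i$, whence $\bar g\in K_i/N$. This proves $\{K_i/N\}$ covers $(G/N)_0$ and gives $\sigma_0(G/N)\leq m=\sigma_0(G)$, yielding equality.

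For part (1), since $G/G'$ is abelian and not a $p$-group, I fix a prime $p$ dividing $\abs{G/G'}$ and write $G/G'=A_p\times A_{p'}$ as the product of its Sylow $p$-subgroup and its (nontrivial, proper) Hall $p'$-subgroup. Every primary element of $A_p\times A_{p'}$ of $p$-power order lies in $A_p$, and every primary element of order coprime to $p$ lies in $A_{p'}$, so $\{A_p,A_{p'}\}$ is a primary covering of $G/G'$ of size $2$. By part (2) this gives $\sigma_0(G)\leq\sigma_0(G/G')\leq 2$, and combined with $\sigma_0(G)\geq 2$ we conclude $\sigma_0(G)=2$.

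For part (3), I argue by contradiction: suppose $\{H_1,\dots,H_s\}$ is a minimal primary covering of $G$, so $s=\sigma_0(G)$, and suppose $M\notin\{H_1,\dots,H_s\}$. Intersecting with $M$, each $H_i\cap M$ is a subgroup of $M$; it is proper in $M$, for otherwise $M\subseteq H_i$ and maximality of $M$ together with $H_i\neq G$ would force $M=H_i$, against our assumption. Since the $H_i$ cover $G_0$, the subgroups $H_i\cap M$ cover $M\cap G_0=M_0$, producing a primary covering of $M$ of size at most $s$. This contradicts $\sigma_0(M)>\sigma_0(G)=s$ (and remains a contradiction when $\sigma_0(M)=\infty$, since a cyclic $p$-group admits no covering by proper subgroups), so $M$ must belong to every minimal primary covering.

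The routine pieces are the pull-back inequality in (2), the abelian decomposition in (1), and the intersection argument in (3). The main obstacle is the Frattini equality in (2): one must simultaneously arrange that the covering subgroups contain $N$ (via reduction to maximal subgroups, where $N\leq\Phi(G)$ is used) and establish the lifting of primary elements through the quotient, since a primary element of $G/N$ need not obviously be the image of a primary element of $G$.
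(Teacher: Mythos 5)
Your proof is correct and follows essentially the same route as the paper's: pull back coverings for the inequality in (2), pass to maximal subgroups (which automatically contain $\Phi(G)$) for the Frattini equality, cover the abelianization by two proper subgroups of coprime index for (1), and intersect a minimal covering with $M$ for (3). The only differences are expository: you make explicit the lifting of a primary element of $G/N$ to a primary element of $G$ (via a Sylow subgroup of $\langle g\rangle N$), a step the paper leaves implicit, and you derive (1) through the quotient $G/G'$ and part (2) rather than directly inside $G$.
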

\begin{proof}
(1) We trivially have that $G_0\subseteq H\cup K$, where $H$ and $K$ are 
maximal subgroups containing $G'$ of coprime indices.

(2) Since any primary covering of $G/N$ lifts to a primary covering of $G$, we have $\sigma_0(G) \leq \sigma_0(G/N)$. Also, any 
subgroup in a primary covering of $G$ can be replaced by a maximal subgroup 
containing it. Thus when $N\leq \Phi(G)$ we have $\sigma_0(G)=\sigma_0(G/N)$.  

(3) Let $M$ be a maximal subgroup of $G$ such that $\sigma_0(G) < \sigma_0(M)$ 
and let $H_1,\ldots,H_n$ be any primary covering of $G$ of size 
$n=\sigma_0(G)$. Of course, the family $\{H_i \cap M\}_{i=1}^n$ 
covers $M_0$ (the set of primary elements of $M$), so 
since $\sigma_0(M) > n$ we deduce that there exists $i \in \{1,\ldots,n\}$ 
such that $H_i \cap M = M$, in other words $M \leq H_i$. Being $M$ a maximal 
subgroup of $G$ we deduce $M=H_i$.
\end{proof}

\begin{lem}\label{lem:solvable_formula}
Let $N$ be a complemented minimal normal subgroup of a solvable group $G$ 
and let $b$ be the number of complements of $N$ in $G$. If $b>1$ then 
$b \geq |N|$.
\end{lem}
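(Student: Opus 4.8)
The plan is to translate the enumeration of complements into low-degree cohomology / linear algebra. Since $G$ is solvable and $N$ is a minimal normal subgroup, $N$ is an elementary abelian $p$-group for some prime $p$; write $\abs{N}=p^d$, so that $N$ is a $d$-dimensional vector space over $\FF_p$. As $N$ is abelian and normal, conjugation makes $N$ a module for $Q:=G/N$ over $\FF_p$, and every complement of $N$ projects isomorphically onto $Q$. Fixing one complement $H_0$ and identifying it with $Q$, I would set up the standard bijection between the complements of $N$ and the derivations (1-cocycles) $Z^1(Q,N)$: a complement $K$ determines the map $q\mapsto\gamma(q)$, where $\gamma(q)\in N$ is the unique element with $\gamma(q)\tilde q\in K$ and $\tilde q\in H_0$ the chosen lift of $q$, the subgroup condition on $K$ being exactly the cocycle identity $\gamma(q_1q_2)=\gamma(q_1)+q_1\cdot\gamma(q_2)$ (with $N$ written additively). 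This identifies $b=\abs{Z^1(Q,N)}$; in particular $b$ is a power of $p$.

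Next I would read off the size of $Z^1$ from the exact sequence
\[
0 \to C_N(Q) \to N \xrightarrow{\ d\ } Z^1(Q,N) \to H^1(Q,N) \to 0,
\]
where $d$ sends $n\in N$ to the principal derivation $q\mapsto q\cdot n-n$, whose image is the coboundary group $B^1(Q,N)$ with kernel $C_N(Q)$. Hence $b=\abs{Z^1(Q,N)}=\frac{\abs{N}}{\abs{C_N(Q)}}\cdot\abs{H^1(Q,N)}$. The key structural input is minimality of $N$: it forces $N$ to be an irreducible $\FF_p Q$-module, so that its submodule $C_N(Q)$ is either $0$ or $N$.

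In the first case, $C_N(Q)=0$, the formula gives $b=\abs{N}\cdot\abs{H^1(Q,N)}\geq\abs{N}$ with no further work, and here the hypothesis $b>1$ is automatic. The delicate case is $C_N(Q)=N$, i.e.\ $N\leq Z(G)$: then $B^1=0$, the derivations are precisely the homomorphisms, so $b=\abs{\operatorname{Hom}(Q,N)}$, and the bound coming only from ``$b$ is a power of $p$'' yields merely $b\geq p$. This is where I expect the main obstacle. To upgrade $b\geq p$ to $b\geq\abs{N}=p^d$, I would argue that if $b>1$ there is a nonzero homomorphism $\phi\colon Q\to\FF_p$ (compose a nontrivial element of $\operatorname{Hom}(Q,N)$ with a coordinate projection $N\to\FF_p$ on which it does not vanish), and then composing $\phi$ with each of the $d$ coordinate inclusions $\FF_p\hookrightarrow N$ produces $d$ linearly independent elements of the $\FF_p$-vector space $\operatorname{Hom}(Q,N)$. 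Therefore $\dim_{\FF_p}\operatorname{Hom}(Q,N)\geq d$ and $b=\abs{\operatorname{Hom}(Q,N)}\geq p^d=\abs{N}$, settling both cases.
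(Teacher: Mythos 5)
Your proof is correct, and it takes a genuinely different route from the paper's. The paper simply invokes a counting theorem of Gasch\"utz (Satz 3 of the cited reference), which gives the exact formula $b=\abs{N}^{\varepsilon}\abs{E}^{\beta-1}$, where $E=\mathrm{End}_G(N)$, $\varepsilon\in\set{0,1}$ according to whether $N$ is central or not, and $\beta$ is the number of non-Frattini chief factors $G$-isomorphic to $N$ in a chief series through $N$; the lemma is then a two-line case analysis (non-central: $b\geq\abs{N}$ outright; central: $\abs{N}=p$, $E=\FF_p$, and $b>1$ forces $\beta\geq 2$, hence $b\geq p$). You instead prove everything from scratch: complements correspond to $Z^1(Q,N)$, the exact sequence $0\to C_N(Q)\to N\to Z^1(Q,N)\to H^1(Q,N)\to 0$ gives $b=\abs{H^1(Q,N)}\cdot\abs{N}/\abs{C_N(Q)}$, and irreducibility yields the dichotomy $C_N(Q)\in\set{0,N}$, after which both cases go through correctly. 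What your approach buys is self-containedness (no appeal to Gasch\"utz's theorem) and a more general central case: your linear-independence argument shows $b\geq\abs{N}$ for any elementary abelian $N$ with trivial $Q$-action, without minimality. What the paper's approach buys is brevity and, more importantly, reusability: the explicit formula (specifically the parameter $\beta$) is cited again at the end of the proof of Theorem \ref{thm:solvable}, where the contradiction obtained is exactly ``$b=1$ yet $\beta\geq 2$''; if your proof replaced the paper's, that final step would need rephrasing (it can be: a chief factor above the central factor $K/N$ that is $G$-isomorphic to it produces a nontrivial homomorphism $G/K\to K/N$, whence $b=\abs{\operatorname{Hom}(G/K,K/N)}>1$). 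One simplification you missed: in your ``delicate'' central case, minimality already forces $\abs{N}=p$, since every subgroup of a central subgroup is normal in $G$; thus $d=1$ and the bound $b\geq p$ that you call too weak is already equal to $\abs{N}$, making your upgrade to $p^d$ correct but unnecessary.
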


\begin{proof}
Let $E=\mathrm{End}_G(N)$. By a result of Gasch\"{u}tz, \cite[Satz 3]{Gasch}, 
we know that $b=|N|^{\varepsilon} |E|^{\beta-1}$ where $\varepsilon$ is 
$0$ or $1$ according to whether $N$ is central or not in $G$, and $\beta$ 
is the number of non-Frattini chief factors $G$-isomorphic to $N$, in a 
chief series of $G$ starting with $N$. It follows that $b \geq |N|$ if 
$N$ is not central, and if $N$ is central then $|N|=p$ is a prime number 
and $E$ is the field with $p$ elements, so that $\beta \neq 1$ 
(being $b>1$ by hypothesis) hence $b \geq |E| = p = |N|$.
\end{proof}

Recall Tomkinson's result (\cite[Theorem 2.2]{Tom}) which states that if $G$ is a finite solvable group then $\sigma(G)=q+1$ where $q$ is the order of the smallest chief factor of $G$ with more than one complement.

\ 

\noindent
{\it Proof of Theorem \ref{thm:solvable}.} 
Let $G$ be a non-cyclic finite solvable group. If $\abs{G/G'}$ is not a prime 
power, then by Lemma \ref{lem:solvable_0} (1) we have $\sigma_0(G)=2$. Thus 
assume that $G/G'$ is a $p$-group for some prime $p$ and define $\alpha$ to 
be the smallest order of a chief factor of $G$ 
admitting more than one complement (this is well-defined because $G$ is not 
a cyclic $p$-group). We need to show that $\sigma_0(G)=1+\alpha$. 
By the aforementioned result of Tomkinson, 
$\sigma_0(G) \leq \sigma(G)=1+\alpha$. Let $N$ 
be a normal subgroup of $G$ such that $\sigma_0(G)=\sigma_0(G/N)$ with 
$|G/N|$ minimal with this property. Let $K/N$ be a minimal normal subgroup 
of $G/N$, then $\sigma_0(G/N) < \sigma_0(G/K)$, hence by Lemma 
\ref{lem:solvable_0} (2) $K/N$ admits a complement $M/N$ in $G/N$ which is a 
maximal subgroup. Being 
$\sigma_0(G/N) < \sigma_0(G/K) = \sigma_0(M/N)$, we deduce that $M/N$ 
appears in every minimal primary covering of $G/N$, hence all $b$ 
complements of $K/N$ in $G/N$ appear in a fixed minimal primary covering 
of $G/N$. However, no element of $K/N$ belongs to any complement of 
$K/N$, hence $\sigma_0(G/N) \geq 1+b$. If $b \neq 1$, then $b \geq |K/N|$ by 
Lemma \ref{lem:solvable_formula} and this implies the result. 
Assume now $b=1$. Then $G/N$ is a direct product $K/N \times M/N$ 
hence $K/N$, being a central chief factor, is cyclic of prime order 
and, being an epimorphic image of the $p$-group $G/G'$, we deduce that 
$|K/N|=p$. 
Moreover $M/N$ is nontrivial, because $G/N \not \cong C_p$ (being 
$\sigma_0(G)=\sigma_0(G/N)$), so $M/N$ has nontrivial abelianization. 
Since $G/G'$ is a $p$-group it follows that 
$M/N$ projects onto $C_p$. This $C_p$ is therefore a chief factor above 
$K/N$ which is $G$-isomorphic to $K/N$. In particular, the number of 
non-Frattini chief factors $G$-isomorphic to $C_p$ is $\beta\geq 2$. 
This contradicts the formula in the proof of Lemma 
\ref{lem:solvable_formula}.\qed

%
%
%
%
\section{Symmetric groups}\label{sec:sym_alt}
We introduce some notation that will be frequently used. 
Let $n$ be a positive integer and set $\Omega := \set{1,2,\ldots,n}$ on 
which the symmetric group $S_n$ acts naturally. Given natural numbers $k\geq 1$ and
$x_1\geq x_2\geq \ldots\geq x_k\geq 1$ such that $\sum_{i=1}^k x_i=n$, we 
denote with $(x_1,x_2,\ldots,x_k)$ both the partition of $n$ whose 
parts are precisely the $x_i$ and the set of all permutations in $S_n$ 
having as cyclic type this partition. In particular, $(n)$ denotes the 
set of $n$-cycles of $S_n$.  

The maximal subgroups of $S_n$ split in three different classes, according 
to their action on $\Omega$: intransitive, imprimitive and 
primitive subgroups (see \cite{DM}). 

Any intransitive maximal subgroup is the setwise 
stabilizer of a set of cardinality $m$, for some $1\leq m<n/2$. 
In particular, any such subgroup is conjugate to the stabilizer 
of the set $\set{1,2, \ldots, m}\subseteq \Omega$, which we denote with 
$X_m$. It is therefore isomorphic to $S_m\times S_{n-m}$ and its index
is ${n\choose m}$. We set 
$$\mathcal{X}_m=\set{\textrm{conjugates of } X_m\simeq S_m\times S_{n-m}}$$
and  
$$\mathcal{X}=\bigcup_{1\leq m<n/2}\mathcal{X}_m=
\set{\textrm{intransitive maximal subgroups of } S_n}.$$

The imprimitive maximal subgroups of $S_n$ are the stabilizers of 
partitions of $\Omega$ into equal-sized subsets.
If $d$ is any proper nontrivial divisor of $n$ we set $W_d$ the 
stabilizer of the partition
$$\set{\set{1,2,\ldots,d}, \set{d+1,d+2,\ldots,2d},\ldots, 
\set{n-d+1,n-d+2,\ldots,n}}.$$
Note that $W_d$ is isomorphic to the wreath product $S_d\wr S_{n/d}$, and it has index $\frac{n!}{(d!)^{n/d}\cdot(n/d)!}$. Also note that any imprimitive 
maximal subgroup of $S_n$ is conjugate to $W_d$, for some proper 
nontrivial divisor $d$ of $n$. We set 
$$\mathcal{W}_d=\set{\textrm{conjugates of } W_d\simeq S_d\wr S_{n/d}}$$
and  
$$\mathcal{W}=\bigcup_{1< d\vert n,\, d\neq n}\mathcal{W}_d=
\set{\textrm{imprimitive maximal subgroups of } S_n}.$$

Finally set 
$$\mathcal{P}=\set{\textrm{proper primitive maximal subgroups of } S_n}$$
where proper means that both $S_n$ and $A_n$ are not members 
of $\mathcal{P}$.

\begin{lem} \label{swap}
If $a>b$ are positive integers then $$a!^b b! \geq b!^a a!$$ with equality if and only if $b=1$.
\end{lem}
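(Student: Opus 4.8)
The plan is to divide both sides of the claimed inequality by the positive quantity $a!\,b!$, reducing it to the equivalent statement
$$a!^{\,b-1}\geq b!^{\,a-1},$$
with equality precisely when $b=1$. When $b=1$ both sides equal $1$, so equality holds. It therefore remains to prove the strict inequality $a!^{\,b-1}>b!^{\,a-1}$ whenever $a>b\geq 2$.

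I would fix $b\geq 2$ and induct on $a>b$. The key auxiliary fact, used throughout, is that
$$(m+1)^{\,b-1}>b!\qquad\textrm{for all }m\geq b.$$
Since the left side is increasing in $m$, it suffices to prove this for $m=b$, i.e.\ $(b+1)^{\,b-1}>b!$. I would establish this via AM--GM: the arithmetic mean of $1,2,\ldots,b$ is $(b+1)/2$, so $b!<\bigl((b+1)/2\bigr)^{\,b}$ (strict since $b\geq 2$), and then $\bigl((b+1)/2\bigr)^{\,b}\leq (b+1)^{\,b-1}$ because $b+1\leq 2^{\,b}$ for $b\geq 2$.

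For the base case $a=b+1$, writing $a!=(b+1)\,b!$ turns the desired inequality $a!^{\,b-1}>b!^{\,a-1}$ into $(b+1)^{\,b-1}>b!$, which is exactly the auxiliary fact. For the inductive step, assuming $a!^{\,b-1}>b!^{\,a-1}$ holds for some $a>b$, I would multiply through by $(a+1)^{\,b-1}$ to obtain $(a+1)!^{\,b-1}>(a+1)^{\,b-1}\,b!^{\,a-1}$, and then invoke $(a+1)^{\,b-1}>b!$ (the auxiliary fact with $m=a\geq b$) to conclude
$$(a+1)!^{\,b-1}>b!\cdot b!^{\,a-1}=b!^{\,a},$$
as required.

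The only genuine obstacle is the base inequality $(b+1)^{\,b-1}>b!$; once that elementary estimate is secured, both the reduction and the induction are routine bookkeeping. I expect the AM--GM argument above to dispatch it cleanly, and the strictness propagated through the induction yields the equality characterization $b=1$.
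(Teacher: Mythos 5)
Your proof is correct, and it takes a genuinely different route from the paper's after the shared opening move. Both arguments begin by dividing through by $a!\,b!$ to reach the equivalent statement $a!^{b-1}\geq b!^{a-1}$, but from there they diverge. The paper takes logarithms, rewrites the inequality as $\ln(a!)/(a-1)\geq \ln(b!)/(b-1)$, and notes it suffices to show that $\ln(x!)/(x-1)$ is increasing; this collapses everything to the consecutive case $b=a-1$, where the inequality becomes $a^{a-1}\geq a!$, which is immediate since $a!$ is a product of $a-1$ factors each at most $a$ (with strict inequality for $a\geq 3$). You instead fix $b\geq 2$ and induct on $a$, which forces you to establish the auxiliary bound $(m+1)^{b-1}>b!$ for \emph{every} $m\geq b$ rather than just at the first step; you get it from monotonicity in $m$ together with the case $m=b$, which you settle by AM--GM. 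It is worth noting that the two proofs hinge on the same core inequality: your $(b+1)^{b-1}>b!$ is exactly the paper's $a^{a-1}>a!$ specialized to $a=b+1$. So the real differences are the scaffolding (telescoping via one-variable monotonicity versus fixed-$b$ induction) and the proof of that core inequality, where your AM--GM argument, while perfectly valid, is heavier than necessary: the termwise bound $b!=2\cdot 3\cdots b\leq b^{b-1}<(b+1)^{b-1}$ already suffices. What your route buys in exchange is that it never leaves integer arithmetic and avoids logarithms entirely.
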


\begin{proof}
If $b=1$ we have equality, now assume $b \geq 2$, so that $a \geq 3$. Since the stated inequality is equivalent to $$\frac{\ln(a!)}{a-1} \geq \frac{\ln(b!)}{b-1}$$ it is enough to prove that the function $\ln(a!)/(a-1)$ is increasing, hence we may assume $b=a-1$. So we need to prove that $a!^{a-2} \geq (a-1)!^{a-1}$ which is equivalent to $a^{a-1} \geq a!$, which is actually a strict inequality being $a \geq 3$.
\end{proof}

The following lemmas are part of \cite[Corollary 1.2 and Lemma 2.1]{Maroti2002}.

\begin{lem}[Mar\'oti Lemma 2.1, On the orders of primitive groups] \label{lemab}
Let $m>1$ be an integer and suppose $m=a_1b_1=a_2b_2$ with $a_1,a_2,b_1,b_2$ positive integers at least $2$, $b_1 \geq a_1$, $b_2 \geq a_2$, $a_1 \leq a_2$ and (consequently) $b_1 \geq b_2$. Then $$b_1!^{a_1} a_1! \geq b_2!^{a_2} a_2!$$with equality if and only if $a_1=a_2$ and $b_1=b_2$. Moreover if $p$ is the smallest prime divisor of $m$ and $d$ is any divisor of $m$ with $1 < d < m$ then $$(m/p)!^p p! \geq (m/d)!^d d!$$with equality if and only if $d=p$.
\end{lem}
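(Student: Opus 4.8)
The plan is to treat the two assertions in turn, with the first (the general monotonicity) doing the real work and the second following from it together with Lemma \ref{swap} (which supplies the ``reflection'' across the diagonal). Throughout I would write $g(a):=(m/a)!^{a}\,a!$ for a divisor $a\mid m$ with $a\le m/a$; the hypotheses $b_i\ge a_i$ say exactly that $a_i\le\sqrt m$, and the quantities to be compared are $g(a_1)$ and $g(a_2)$. The first assertion is then that $g$ is weakly decreasing on divisors $\le\sqrt m$, strictly so whenever the argument strictly increases.

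For the first assertion I would pass to logarithms. With $b_i=m/a_i$ and $a_1\le a_2\le b_2\le b_1$, telescoping both factorial ratios gives
\[
\ln g(a_1)-\ln g(a_2)=a_1\sum_{k=b_2+1}^{b_1}\ln k-(a_2-a_1)\ln(b_2!)-\sum_{j=a_1+1}^{a_2}\ln j .
\]
The linchpin is the identity $a_1(b_1-b_2)=(a_2-a_1)b_2$, which is just a restatement of $a_1b_1=a_2b_2=m$ and which lets me match the positive and negative contributions. Using $\sum_{k=b_2+1}^{b_1}\ln k\ge(b_1-b_2)\ln(b_2+1)$ and $\sum_{j=a_1+1}^{a_2}\ln j\le(a_2-a_1)\ln a_2\le(a_2-a_1)\ln b_2$ (the last step since $a_2\le b_2$), the right-hand side is at least
\[
(a_2-a_1)\bigl(b_2\ln(b_2+1)-\ln(b_2!)-\ln b_2\bigr).
\]
The bracket is positive by the elementary bound $b_2!\le b_2^{\,b_2-1}$, which yields $\ln(b_2!)+\ln b_2\le b_2\ln b_2<b_2\ln(b_2+1)$. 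Hence $\ln g(a_1)-\ln g(a_2)\ge 0$, and the factor $(a_2-a_1)$ together with the strict final step shows this is strict as soon as $a_1<a_2$; that is precisely the stated equality case.

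For the second assertion, let $p$ be the smallest prime divisor of $m$ and let $d$ be a divisor with $1<d<m$, so $m$ is composite, $p\le\sqrt m$, and $d\ge p$. If $d\le\sqrt m$ then $p\le d\le\sqrt m$ and the first assertion gives $g(p)\ge g(d)$ at once. If instead $d>\sqrt m$, put $d^{*}=m/d$, which is a divisor with $2\le d^{*}\le\sqrt m$ and $d^{*}\ge p$; then the first assertion gives $g(p)\ge g(d^{*})$, while Lemma \ref{swap} applied with $a=d>b=d^{*}$ gives $g(d^{*})=d!^{\,d^{*}}(d^{*})!\ge(d^{*})!^{\,d}d!=g(d)$, so again $g(p)\ge g(d)$. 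For strictness, the case $p<d\le\sqrt m$ is covered by the first assertion, and the case $d>\sqrt m$ by the equality clause of Lemma \ref{swap} (which would force $d^{*}=1$, excluded). This is exactly $(m/p)!^{p}p!\ge(m/d)!^{d}d!$ with equality if and only if $d=p$.

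The hard part is the monotonicity in the first assertion: everything hinges on writing $\ln g(a_1)-\ln g(a_2)$ as the two telescoped sums above and then using $a_1b_1=a_2b_2$ to convert $a_1(b_1-b_2)$ into $(a_2-a_1)b_2$, so that the sums become comparable term by term and the whole estimate collapses onto the single inequality $b!\le b^{\,b-1}$. The one other point needing attention is propagating strictness through these bounds for the equality statements, but each estimate above is an equality precisely when $a_1=a_2$, so the equality characterizations come out cleanly.
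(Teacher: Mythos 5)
Your proof is correct and is essentially the paper's own argument: the telescoped identity for $\ln g(a_1)-\ln g(a_2)$, the exponent relation $a_1(b_1-b_2)=(a_2-a_1)b_2$, and the elementary bound $b_2!\,b_2\le b_2^{b_2}$ are exactly the multiplicative chain in the paper's proof of the first assertion, rewritten additively in logarithms (with the minor bonus that factoring out $(a_2-a_1)$ makes the equality case immediate, where the paper just says it is ``easy to deduce''). The second assertion is handled identically to the paper: apply the first part directly when $d\le\sqrt{m}$, and when $d>\sqrt{m}$ reflect to $m/d$ via Lemma \ref{swap}, whose equality clause gives the strictness.
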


\begin{proof}
Observe that since $b_2 \geq a_2 \geq 2$ we have $b_2^{b_2} \geq b_2! b_2 \geq b_2! a_2$.
\begin{align*}
b_1!^{a_1} a_1! & \geq b_2!^{a_1} (b_2+1)^{a_1} \cdots b_1^{a_1} a_1! \\ & \geq b_2!^{a_1} a_1! b_2^{a_1(b_1-b_2)} \\ & \geq b_2!^{a_1} a_1! (b_2! a_2)^{(a_1/b_2)(b_1-b_2)} \\ & = b_2!^{a_1} a_1! (b_2!a_2)^{(a_2-a_1)} \\ & \geq b_2!^{a_1} a_1! b_2!^{a_2-a_1} (a_1+1) \cdots a_2 \\ & = b_2!^{a_2} a_2!.
\end{align*}
If equality holds then all of the above inequalities are equalities and it is 
easy to deduce that $a_1=a_2$ and consequently $b_1=b_2$. To deduce the last 
statement, observe that it is trivial if $m=p$ so assume this is not the case, 
so that $p^2 \leq m$. Choose $a_1=p$, $b_1=m/p$, then $a_1 \leq b_1$ and the 
inequality $a_1 \leq a_2$ will be true for every choice of a divisor $a_2>1$ of 
$m$, by minimality of $p$. If $d^2 \leq m$ then choose $a_2=d$, proving the 
strict inequality with equality if and only if $d=p$. If $d^2 > m$ then 
$d > m/d$ hence Lemma \ref{swap} implies that $d!^{m/d}(m/d)! \geq (m/d)!^d d!$, so it is enough to show that $(m/p)!^p p! \geq d!^{m/d}(m/d)!$, which follows from the above choosing $a_2=m/d$, and again by Lemma \ref{swap} equality does not hold in this case being $m/d \neq 1$.
\end{proof}
 
The orders of the different primitive and imprimitive maximal 
subgroups of $S_n$ can be compared as in the following lemma. 
\begin{lem}\label{lem:order_max_impr}
Let $n\geq 2$, then the following hold.
\begin{enumerate}
\item For every proper nontrivial divisor $d$ of $n$, we have 
$$\abs{W_{n/p}} = (n/p)!^p p! \geq (n/d)!^d d! = \abs{W_d},$$ where $p$ is the smallest prime 
divisor of $n$, and equality holds if and only if $d=p$;
\item if $P\in\mathcal{P}$, then $\abs{P}<3^n$, and when $n>24$, then 
$\abs{P}<2^n$.
\end{enumerate}
In particular, for $n\geq 12$  every $M\in\mathcal{W}\cup\mathcal{P}$ has order 
$$\abs{M}\leq 2\big(\lfloor n/2\rfloor\big)!\big(n-\lfloor n/2\rfloor\big)! $$
with equality if and only if $M\in \mathcal{W}_{n/2}$.
\end{lem}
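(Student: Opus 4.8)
The plan is to dispatch the three assertions in turn, obtaining the first two almost immediately and then combining them for the concluding inequality. For (1), note that the imprimitive maximal subgroup stabilizing a partition of $\Omega$ into $d$ blocks each of size $n/d$ has order $(n/d)!^d\,d!$, so (1) is precisely the last inequality of Lemma \ref{lemab} applied with $m=n$: the maximum is attained, and uniquely so, when the number of blocks equals the smallest prime divisor $p$ of $n$. For (2) I would simply invoke Mar\'oti's bound on the orders of primitive groups (\cite[Corollary 1.2]{Maroti2002}), which yields $\abs{P}<3^n$ for every $P\in\mathcal P$ and the sharper $\abs{P}<2^n$ once $n>24$.

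For the final claim I fix $n\geq 12$ and estimate $\abs{M}$ separately according as $M$ is primitive or imprimitive. The primitive case reduces to the purely numerical inequality $3^n<2(\lfloor n/2\rfloor)!(n-\lfloor n/2\rfloor)!$, which I would prove by induction on $n$: it holds at $n=12$ by direct computation (namely $3^{12}=531441<1036800=2\cdot 6!\cdot 6!$), and passing from $n$ to $n+1$ multiplies the right-hand side by either $\lfloor n/2\rfloor+1$ or $\lceil n/2\rceil$, each at least $7>3$, while the left-hand side only triples. Together with (2) this gives $\abs{P}<3^n<2(\lfloor n/2\rfloor)!(n-\lfloor n/2\rfloor)!$ strictly for every $P\in\mathcal P$.

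For the imprimitive case I would use (1) to reduce to the single largest member, of order $(n/p)!^p\,p!$ with $p$ the smallest prime divisor of $n$. When $n$ is even, $p=2$ and this order equals $2((n/2)!)^2=2(\lfloor n/2\rfloor)!(n-\lfloor n/2\rfloor)!$, so equality is realised exactly on $\mathcal W_{n/2}$ while, by (1), every other imprimitive subgroup is strictly smaller. When $n$ is odd it is necessarily composite here (the prime case giving $\mathcal W=\emptyset$), so $p\geq 3$ and each block has size at most $n/3$; I would then show $(n/p)!^p\,p!<2(\tfrac{n-1}{2})!(\tfrac{n+1}{2})!$. Since $\mathcal W_{n/2}$ is empty for odd $n$, this produces the strict inequality that matches the stated equality condition.

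The step I expect to be the main obstacle is this last odd imprimitive estimate: because the smallest prime $p$ varies with $n$, one must make $(n/p)!^p\,p!<2(\tfrac{n-1}{2})!(\tfrac{n+1}{2})!$ uniform in $p\geq 3$. The constraint that each block has size at most $n/3$ is what supplies the margin, and by Stirling's formula the difference of logarithms is asymptotically $n\log(p/2)\geq n\log(3/2)>0$, so the inequality holds for all large $n$ with the finitely many small odd composite values (the smallest relevant one being $n=15$) checked directly; alternatively one can argue by a monotonicity reduction in the spirit of Lemmas \ref{swap} and \ref{lemab}, collapsing the general $p$ to the extremal three-block configuration.
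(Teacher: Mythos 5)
Your proofs of (1) and (2) coincide with the paper's (Lemma \ref{lemab} applied with $m=n$, and \cite[Corollary 1.2]{Maroti2002}), as does your reduction of the imprimitive case to $W_{n/p}$ and your handling of even $n$. The one genuine difference is the primitive case: the paper deduces $3^n<2(\lfloor n/2\rfloor)!(n-\lfloor n/2\rfloor)!$ from Stirling's bound $k!>e(k/e)^k$ together with a computation valid for $n\geq 14$ and inspection of $n=12,13$, whereas your induction --- base case $3^{12}=531441<1036800=2(6!)^2$, with the right-hand side gaining a factor $\lfloor n/2\rfloor+1$ or $\lceil n/2\rceil\geq 7>3$ at each step while the left-hand side only triples --- is correct, more elementary, and arguably cleaner.

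The odd imprimitive case, however, is where your proposal stops short of a proof, and it is the only analytically delicate step of the lemma. Two concrete problems. First, the uniformity in $p$ is not supplied by ``each block has size at most $n/3$'' (i.e.\ $p\geq 3$): that only controls the main term $n\log(p/2)\geq n\log(3/2)$. What controls the $p$-dependent corrections (the term $p\log p$ and the $p$ copies of the Stirling error accumulated in $(n/p)!^p$) is the upper bound $p\leq\sqrt n$, which holds because $p$ is the smallest prime factor of the composite number $n$; you never invoke it, and without it those corrections need not be $o(n)$. The paper's substitute for all of this is a clean trick: since $p^2\leq n$, the map $x\mapsto x^{n-1}/n^x$ is increasing on $[1,p]$, hence $p^{n-1}/n^p\geq 3^{n-1}/n^3$, which removes $p$ from the estimate entirely and yields an explicit threshold ($n\geq 22$), leaving $12\leq n\leq 21$ to inspection. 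Your plan ``asymptotics plus finitely many small checks starting at $n=15$'' cannot be executed as written, because without explicit constants in the Stirling estimate you do not know where the finite check may stop. Second, your alternative route --- collapsing general $p$ to the three-block configuration ``in the spirit of'' Lemmas \ref{swap} and \ref{lemab} --- fails whenever $3\nmid n$: Lemma \ref{lemab} compares two factorizations $m=a_1b_1=a_2b_2$ by positive integers, so the choice $a_1=3$ is inadmissible unless $3\mid n$, and for odd composite $n$ with smallest prime factor $p\geq 5$ there is nothing to compare against; the monotonicity trick above is precisely the real-variable replacement for that comparison.
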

\begin{proof}
(1) follows from Lemma \ref{lemab}, and (2) is \cite[Corollary 1.2]{Maroti2002}.

We now prove the last part of the Lemma.\\ 
Let first $M\in\mathcal{P}$. By (2) the order of $M$ is bounded above by 
$3^n$. 
Recall Stirling's bound $k!> e(k/e)^k$, which holds for every $k \geq 2$ and can be proved by noting that
$$\frac{k^k}{k!} = \frac{k^{k-1}}{(k-1)!} = \prod_{i=1}^{k-1} \left( \frac{i+1}{i} \right)^i < \prod_{i=1}^{k-1} (e^{1/i})^i = e^{k-1}.$$
We deduce that 
$$2\big(\lfloor n/2\rfloor\big)!\big(n-\lfloor n/2\rfloor\big)!
> g(n)=\begin{cases} 2e^2 \left(\frac{n}{2e}\right)^n & \textrm{ if } n \textrm{ is even,}\\
 e (n+1) \left( \frac{n-1}{2e} \right)^{n-1} & \textrm{ if } n \textrm{ is odd.}
\end{cases}$$
Computation shows that for every $n\geq 14$ the function $g(n)\geq 3^n$. The cases
$n=12$ and $n=13$ can be done by inspection and the proof for the primitive case
is completed.
%

Assume now that $M\in\mathcal{W}$, in particular $m$ is not a prime number. 
Then by (1) we have $\abs{M}\leq\abs{W_{n/p}}$,
where $p$ is the smallest prime number that divides $n$. 
The result is therefore trivial if $n$ is even. Let $n$ be odd. 
We need to prove that 
\begin{equation}\label{eq:order_max_2}
R=\frac{(n+1)(((n-1)/2)!)^2}{(n/p)!^p p!}> 1.
\end{equation}
Observe that, being $p^2 \leq n$, the map $x \mapsto x^{n-1}/n^x$ is an increasing function in the interval $1 \leq x \leq p$, so that $p^{n-1}/n^p \geq 3^{n-1}/n^3$. Using the inequalities $e(n/e)^n \leq n! \leq en(n/e)^n$ we see that

\begin{align*}
R & \geq \frac{e^2 (n+1) \left( \frac{n-1}{2e} \right)^{n-1}}{epn^p\left(\frac{n}{ep}\right)^n} 
= 2e^2\cdot \frac{n+1}{n-1}\cdot  \frac{p^{n-1}}{n^p}\cdot  \left( \frac{n-1}{2n} \right)^n  \\ 
& > 2e^2 \cdot \frac{3^{n-1}}{n^3} \cdot \left( \frac{n-1}{2n} \right)^n = \frac{2e^2}{3n^3} \left( \frac{3(n-1)}{2n} \right)^n \geq 1
\end{align*}
whenever  $n\geq 22$. The case $12 \leq n \leq 21$ can be done by inspection.
\end{proof}
\noindent
In the sequel we will need the following result.

\begin{lem}\label{lem:S_n_impr}
Let $n$ be even and $W\in\mathcal{W}_{n/2}$. 
Assume that $n=\sum_{i=1}^k2^{a_i}$ is a partition of $n$ with 
$a_1\geq a_2\geq \ldots\geq a_k\geq 1$ that does not contain
subpartitions of $n/2$, and let $\Pi$ be the conjugacy class
of elements of $S_n$ of type $(2^{a_1},2^{a_2},\ldots, 2^{a_k})$. 
Then 
$$|W \cap \Pi| = \frac{|\Pi|}{|S_n:W|} \cdot 2^{k-1}.$$

In particular, when $n$ is a power of $2$ and $\Pi=(n)$, the set 
of $n$-cycles, then 
$$\abs{W\cap \Pi}=\frac{\abs{W}}{n}= (n/2)!(n/2-1)!$$
\end{lem}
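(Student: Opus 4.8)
The plan is to count, for a fixed imprimitive subgroup $W \in \mathcal{W}_{n/2}$ stabilizing the partition of $\Omega$ into two blocks $B_1, B_2$ of size $n/2$, exactly which elements of the prescribed class $\Pi$ lie in $W$. Recall that $W \cong S_{n/2} \wr S_2$, so an element $w \in W$ either fixes both blocks setwise (the ``even'' or base-group case) or swaps them (the ``odd'' case). First I would analyze how a permutation $\pi \in \Pi$, whose cycle type consists of $k$ cycles of lengths $2^{a_1} \geq \cdots \geq 2^{a_k}$, can sit inside $W$. Each cycle of $\pi$ must either be contained entirely within one block (if $w$ preserves blocks, every cycle lies in a single block) or must alternate between the two blocks (if a cycle meets both blocks, it must cross between them an even number of times, and since the block sizes are equal the cycle must split its support evenly, using the swap in $S_2$). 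The hypothesis that the partition $n = \sum 2^{a_i}$ contains no subpartition summing to $n/2$ is precisely what rules out the first case: one cannot distribute a collection of the $2^{a_i}$ into one block of size $n/2$. Hence every element of $W \cap \Pi$ must swap the two blocks, and moreover \emph{every} cycle must cross, each contributing a factor of $2$ for the choice of which block each cycle ``starts'' in, except that an overall symmetry removes one such factor.

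The key computational step is then a double-counting or orbit-counting argument. I would set $N = |W \cap \Pi|$ and relate it to $|\Pi|$ via the action of $S_n$ on the $|S_n : W|$ conjugates of $W$. Concretely, count pairs $(\pi, W')$ with $\pi \in \Pi$ and $\pi \in W' \in \mathcal{W}_{n/2}$ in two ways. Since $\Pi$ is a single conjugacy class and all conjugates of $W$ are equivalent, each $\pi \in \Pi$ lies in the same number $t$ of such conjugates, giving the pair count as $|\Pi| \cdot t$; on the other side it equals $|S_n : W| \cdot |W \cap \Pi|$. The factor $2^{k-1}$ in the statement should emerge as exactly the value of $t$: a fixed $\pi \in \Pi$ with all cycles crossing lies in a conjugate $W'$ determined by a $2$-coloring of its $k$ cycles (assigning each cycle's support a splitting into the two blocks), and there are $2^k$ such colorings, but the two colorings related by globally swapping the block labels produce the same unordered partition $W'$, so $t = 2^{k-1}$. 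This yields the claimed identity $|W \cap \Pi| = \frac{|\Pi|}{|S_n : W|} \cdot 2^{k-1}$.

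The hard part will be justifying rigorously that the only way for $\pi \in \Pi$ to lie in $W$ is with \emph{all} cycles crossing both blocks, and that the coloring-to-conjugate correspondence is exactly two-to-one. For the first point I would argue that if some cycle of $\pi$ were contained in a single block, then consider the collection of cycles lying in $B_1$: their supports would be a disjoint union of some of the sets of sizes $2^{a_i}$ totaling $|B_1| = n/2$, contradicting the no-subpartition hypothesis (a crossing cycle of length $2^{a_i}$ contributes exactly $2^{a_i - 1}$ to each block, so the within-block contributions of crossing cycles alone cannot force a subpartition, but any fully-contained cycle would). One must check carefully that crossing cycles split their support into two equal halves of size $2^{a_i-1}$ in each block, so that the total from crossing cycles is automatically balanced and no contained cycle is needed or allowed. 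For the second point, the two-to-one claim follows because specifying $W'$ means specifying the unordered block partition $\{B_1, B_2\}$, and the coloring specifies the ordered pair; swapping labels is the unique nontrivial symmetry.

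For the ``in particular'' clause, specialize to $n = 2^m$ and $\Pi = (n)$, the class of $n$-cycles. Here $k = 1$ (a single cycle of length $n$), so $2^{k-1} = 1$, and since $n/2 = 2^{m-1}$ cannot be written as a subpartition of the single part $n$, the hypothesis holds. The general formula gives $|W \cap \Pi| = |\Pi| / |S_n : W| = |(n)| \cdot |W| / |S_n|$. Using $|(n)| = (n-1)!$ and $|S_n| = n!$, this simplifies to $|W|/n$. Finally I would substitute $|W| = |S_{n/2} \wr S_2| = ((n/2)!)^2 \cdot 2$ to obtain $|W|/n = 2((n/2)!)^2 / n = (n/2)! \cdot (2 (n/2)! / n) = (n/2)!\,(n/2 - 1)!$, using $2(n/2)!/n = (n/2-1)!$, which is the stated value.
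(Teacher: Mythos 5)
Your proof is correct and takes essentially the same approach as the paper's: a double count of the pairs $(\pi, W')$ with $\pi \in \Pi \cap W'$, $W' \in \mathcal{W}_{n/2}$, combined with the observation that the no-subpartition hypothesis forces every element of $\Pi \cap W$ to swap the two blocks, so that a fixed $\pi$ lies in exactly $2^{k-1}$ conjugates --- your ``$2^k$ colorings modulo the global block swap'' is just a repackaging of the paper's count of two choices per cycle after the first one is fixed. The specialization to $n$-cycles when $n$ is a power of $2$ is also handled the same way.
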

\begin{proof} 
Double counting the size of the set 
$$\{(x,W)\ \vert \ x \in \Pi \cap W,\ W \in \mathcal{W}_{n/2}\},$$ 
we find that 
$$|W \cap \Pi| = \frac{|\Pi|}{|G:W|} \cdot r,$$ 
where $r$ is the number of elements of $\mathcal{W}_{n/2}$ containing a fixed 
element of $\Pi$. By the assumption 
that the partition defining $\Pi$
does not contain partitions of $n/2$ as subpartitions,
the elements of $\Pi \cap W$ move the two imprimitivity blocks of $W$, 
hence $r = 2^{k-1}$, since no cycle can fix either block, and, once we split up the elements in the $2^{a_1}$-cycle, we have two choices for each 
$2^{a_j}$-cycle, $2\leq j\leq k$.
\end{proof}

%
%
Moreover, we will make use of the following notation and terminology introduced in \cite{Maroti2005}. 

\begin{defn}\label{def:def_unbeat}
Let $\Pi$ be a set of permutations of $S_n$.
We will say that a collection $\mathcal{H}=\set{H_1,\ldots, H_m}$ 
of $m$ proper subgroups of $S_n$ is \emph{definitely unbeatable} 
on $\Pi$ if the following three conditions hold: 
\begin{enumerate}
\item $\Pi\subseteq \bigcup_{i=1}^m H_i$,
\item $\Pi\cap H_i\cap H_j=\varnothing$ for every $i\neq j$,
\item $\abs{M\cap \Pi}\leq \abs{H_i\cap \Pi}$ for every $1\leq i\leq m$ 
and every proper subgroup $M$ of $S_n$ not belonging to $\mathcal{H}$. 
\end{enumerate}
\end{defn}
If $\mathcal{H}$ is definitely unbeatable on $\Pi$, then
$\abs{\mathcal{H}}=\sigma(\Pi)$, where $\sigma(\Pi)$ denotes 
the least integer $m$ such that $\Pi$ is a subset of the 
union of $m$ proper subgroups of $S_n$. Moreover, 
we also say that $\mathcal{H}$ is \emph{strongly definitely unbeatable}
on $\Pi$ if the three conditions above hold and the third one always 
holds with strict inequalites. Note that in the case when $\mathcal{H}$ 
consists of maximal subgroups and it is strongly definitely unbeatable on $\Pi$, 
then $\mathcal{H}$ is the unique minimal cover of the elements of $\Pi$ that uses 
only maximal subgroups (see also \cite[Lemma 3.1]{Swartz}).\\

We start our considerations on the primary covering number of $S_n$ 
by producing a general upper bound. Here and in the
following if $p$ is any prime, we define $n_p$ to be the $p$-part of $n$, that is 
the maximum power of $p$ that divides $n$. 
\begin{lem}\label{lem:S_n_trivial}$\null$
\begin{enumerate} 
\item If $n$ is a power of $2$, then 
$\sigma_0(S_n)\leq 1+\frac{1}{2}{n\choose n/2}$,
\item if $n$ is not a power of $2$, then $\sigma_0(S_n)\leq 1+{n\choose n_2}$.
\end{enumerate}
\end{lem}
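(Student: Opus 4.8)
The goal is to produce, for each case, an explicit primary covering of $S_n$ whose size matches the stated bound. The key observation is that every primary element (element of prime-power order) of $S_n$ must be covered, and the natural candidates are the intransitive maximal subgroups $X_m \simeq S_m \times S_{n-m}$, which have small index ${n \choose m}$, together with the alternating group $A_n$ to mop up the even permutations.

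The plan is to start from $A_n$, which contains all primary elements that happen to be even permutations, and then cover the remaining primary elements (the odd ones) using intransitive subgroups. First I would argue that a primary element $x$ of odd parity cannot be an $n$-cycle when $n$ is not a power of $2$: if $x$ has order a power of a prime $p$, then all its cycle lengths are powers of $p$, so they cannot be a single $n$-cycle (which would force $n$ to be a prime power, and being odd and a transposition-parity obstruction rules out the relevant cases). More carefully, an odd primary element has at least one fixed point or at least one nontrivial cycle shorter than $n$, so it lies in some proper intransitive subgroup; the task is to bound how many such subgroups are needed. The cleanest route is: take $A_n$ together with all ${n \choose n_2}$ conjugates of a suitable $X_{n_2}$ (for case (2)), respectively $\frac{1}{2}{n \choose n/2}$ of the $X_{n/2}$ (for case (1) where $n/2$ equals a middle binomial coefficient halved), and verify these cover $(S_n)_0$.

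The main technical step is the covering verification. For case (2), where $n$ is not a power of $2$, I would show that every odd primary element $x$ fixes setwise some subset of size $n_2$: since $x$ has prime-power order $p^k$, its cycles all have length a power of $p$; because $x$ is odd, $p=2$ and $x$ has an odd number of even-length cycles, and one shows the $2$-part structure forces $x$ to stabilize a set of size exactly $n_2$ (the $2$-part of $n$), placing $x$ in a conjugate of $X_{n_2}$. Counting all such conjugates gives the ${n \choose n_2}$ term, and adding $A_n$ for the even primary elements yields the bound $1 + {n \choose n_2}$. For case (1), with $n = 2^a$, the relevant odd primary elements include the $n$-cycles, and these are stabilized not by intransitive subgroups but must be handled by the imprimitive subgroups $\mathcal{W}_{n/2}$; here Lemma \ref{lem:S_n_impr} is the crucial input, telling us how $n$-cycles distribute among the $\frac{1}{2}{n \choose n/2}$ conjugates of $W_{n/2}$, and one checks these together with $A_n$ suffice.

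The hard part will be confirming that the chosen family genuinely covers \emph{all} odd primary elements, not merely the $n$-cycles or a convenient subfamily — in particular ensuring no odd element of $2$-power order escapes the intransitive (or imprimitive) subgroups selected. This requires a clean combinatorial lemma about which set-sizes a $2$-element must stabilize, pinned down by the $2$-adic valuation of $n$. I would expect the even/odd parity bookkeeping and the precise identification of the stabilized block size $n_2$ to be the delicate point; once that is in place, the counting of conjugates and the inclusion of $A_n$ are routine, and the stated upper bounds follow directly.
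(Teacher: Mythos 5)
Your overall strategy is the paper's: take $A_n$ to absorb every primary element that is an even permutation (in particular every element of odd order, which leaves only the $2$-elements that are odd permutations to be covered), and then use the $\binom{n}{n_2}$ stabilizers of $n_2$-sets when $n$ is not a power of $2$, respectively the $\frac{1}{2}\binom{n}{n/2}$ subgroups in $\mathcal{W}_{n/2}$ when $n=2^a$. Your case (2) is acceptable at the paper's own level of detail: the paper likewise only asserts the key combinatorial fact (the cycle lengths of a $2$-element are powers of $2$ summing to $n$, and any such multiset admits a subsum equal to $n_2$; the induction merges two equal parts into one and splits back if the merged part gets used), so asserting it with a claim of "one shows" is on par with the source.

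Case (1), however, contains a genuine gap. What must be verified is that \emph{every} $2$-element of $S_{2^a}$ --- not just the $n$-cycles --- lies in some member of $\mathcal{W}_{n/2}$, i.e.\ preserves a partition of $\Omega$ into two blocks of size $2^{a-1}$. You never state this, and the tool you call ``the crucial input,'' Lemma \ref{lem:S_n_impr}, cannot supply it: that lemma computes $|W\cap\Pi|$ for a fixed conjugacy class $\Pi$ and fixed $W\in\mathcal{W}_{n/2}$, a counting statement the paper uses only for the \emph{lower} bound in Proposition \ref{pro:S_n n=2power}; it says nothing about coverage. The correct argument is the same subsum fact as in case (2), applied with target $2^{a-1}$: any multiset of powers of $2$ summing to $2^a$ has a subsum equal to $2^{a-1}$ (this is precisely Lemma \ref{lem:subsum}(1) of the paper), so grouping the cycles of a $2$-element accordingly yields an invariant balanced bipartition of $\Omega$, placing the element in a conjugate of $W_{n/2}$. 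Note also that your closing remark --- that the needed lemma concerns ``which set-sizes a $2$-element must stabilize'' --- cannot work in case (1): there $n_2=n$, so set stabilizers give no proper subgroup at all, which is exactly why block partitions, rather than subsets, are forced in that case.
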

\begin{proof}
Note that in any case the alternating groups $A_n$ contains every permutation
of odd order, therefore in order to exhibit a primary covering for $S_n$, we 
may add to $\set{A_n}$ those subgroups that contain $2$-elements
(that are odd permutations). \\
{\it{(1)}} Assume $n=2^a$ with $a\geq 2$. Then every $2$-element of $S_n$ 
stabilizes a $2$-block partition of $\Omega$ and therefore 
every $2$-element belongs to an imprimitive maximal subgroup 
of type $\mathcal{W}_{n/2}$.
Since the number of such partitions (and subgroups) is 
$\frac{1}{2}{n\choose n/2}$ the lemma is proved in this case. \\
{\it{(2)}} The number of subsets of order $n_2$ of
the set $\Omega=\set{1,2,\ldots, n}$ is ${n\choose n_2}$ which is an odd 
number (see for instance \cite[Lemma 1.8]{Isaacs}), and it is easy to prove by induction that every 
$2$-element of $S_n$ belongs to some stabilizer 
$S_{\Delta}$ with $\Delta$ a subset of $\Omega$ of cardinality $n_2$, 
in other words if we write $n$ as a sum of distinct powers of $2$ then there 
exists a subsum that equals $n_2$. 
Since the number of these stabilizers is exactly ${n\choose n_2}$ also this 
point is proved. 
\end{proof}

We already have enough ingredients to complete the proof in
the case $n=2^a$, with $a\geq 2$. 
\begin{pro}\label{pro:S_n n=2power}
If $n=2^a \geq 4$ then $\sigma_0(S_n)=1+\frac{1}{2}{n \choose n/2}$ and 
a minimal primary covering is given by $\set{A_n}\cup\mathcal{W}_{n/2}$.
\end{pro}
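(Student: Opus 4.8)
The plan is to show that the collection $\mathcal{H} = \{A_n\} \cup \mathcal{W}_{n/2}$ is definitely unbeatable on a suitable set $\Pi$ of primary elements, which by the remark following Definition \ref{def:def_unbeat} immediately yields $\sigma_0(S_n) = \abs{\mathcal{H}} = 1 + \frac{1}{2}\binom{n}{n/2}$, matching the upper bound of Lemma \ref{lem:S_n_trivial}(1). The natural choice is $\Pi = (n)$, the set of $n$-cycles of $S_n$, since $n = 2^a$ is a prime power and so every $n$-cycle is a primary element; moreover $n$-cycles are odd permutations (an $n$-cycle with $n$ even is a product of $n-1$ transpositions), so none of them lie in $A_n$. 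Thus I would actually work with $\Pi$ being the union of $(n)$ together with perhaps all of $A_n \cap G_0$; the cleanest route is to verify the three conditions of definite unbeatability directly for $\mathcal{H}$ acting on $G_0$, treating the odd primary elements via $\mathcal{W}_{n/2}$ and absorbing all odd-order elements into $A_n$.

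First I would check condition (1): by Lemma \ref{lem:S_n_trivial}(1) and its proof, every $2$-element of $S_n$ (for $n = 2^a$) stabilizes a $2$-block partition and hence lies in some member of $\mathcal{W}_{n/2}$, while every element of odd order lies in $A_n$; since every primary element of $S_n$ has either $2$-power order or odd prime-power order, $G_0 \subseteq A_n \cup \bigcup \mathcal{W}_{n/2}$. Next, condition (2): I must show that the odd primary elements covered by the $\mathcal{W}_{n/2}$ lie in no other member of $\mathcal{H}$ and that distinct members of $\mathcal{W}_{n/2}$ share no common primary element of the relevant type. Here the $n$-cycles are decisive: an $n$-cycle preserves no proper block system except that it lies in exactly those $W \in \mathcal{W}_{n/2}$ whose two blocks it permutes, and by Lemma \ref{lem:S_n_impr} (with $\Pi = (n)$, $k=1$) each $n$-cycle lies in exactly $2^{k-1} = 1$ member of $\mathcal{W}_{n/2}$, and in no $A_n$ since it is odd. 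This gives pairwise disjointness on the set of $n$-cycles.

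The crux is condition (3), the \emph{extremality} inequality: every proper subgroup $M \notin \mathcal{H}$ must satisfy $\abs{M \cap \Pi} \leq \abs{H_i \cap \Pi}$ for each member $H_i$ of $\mathcal{H}$. For the $A_n$ component this is easy, since $\abs{A_n \cap G_0}$ is the total number of odd-order elements, a large quantity. The real work is comparing, against each $W \in \mathcal{W}_{n/2}$, the number of $n$-cycles in an arbitrary proper subgroup $M$. By Lemma \ref{lem:S_n_impr} we have $\abs{W \cap (n)} = (n/2)!\,(n/2-1)!$, so I must show no proper subgroup $M$ outside $\mathcal{H}$ contains more $n$-cycles than this. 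The natural case division is by maximal subgroup type: intransitive subgroups $X_m$ contain no $n$-cycle at all (an $n$-cycle is transitive); for imprimitive subgroups in $\mathcal{W}_d$ with $d \neq n/2$, I would bound the number of $n$-cycles they contain using Lemma \ref{lem:order_max_impr}(1), which shows $\mathcal{W}_{n/2}$ has the largest order among imprimitive subgroups, combined with a count of how many $n$-cycles can sit inside a $W_d$; and for primitive subgroups $P \in \mathcal{P}$ I would invoke Lemma \ref{lem:order_max_impr}(2) giving $\abs{P} < 3^n$ (and $< 2^n$ for $n > 24$), which for $n = 2^a$ large is far smaller than $(n/2)!\,(n/2-1)! = \abs{W}/n$.

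The step I expect to be the main obstacle is precisely this primitive (and imprimitive-with-$d\neq n/2$) bound: I must confirm that the order estimate $\abs{P} < 3^n$ forces $\abs{P \cap (n)} < (n/2)!\,(n/2-1)!$, which requires a Stirling-type comparison showing $(n/2)!\,(n/2-1)!$ grows faster than $3^n$ — true for all sufficiently large $n = 2^a$, but the small cases $n = 4, 8$ (and possibly $n=16$) will need separate handling, either by the explicit computations already anticipated in the hypothesis $a > 1$ or by direct inspection of the primitive groups of those degrees. Once condition (3) is secured for all three maximal types, maximality of the members of $\mathcal{H}$ together with the definite-unbeatability criterion closes the argument and identifies $\{A_n\} \cup \mathcal{W}_{n/2}$ as the unique minimal primary covering.
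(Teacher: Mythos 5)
Your core idea---working with $\Pi=(n)$ and proving an unbeatability statement by comparing $n$-cycle counts across the three types of maximal subgroups---is the same as the paper's, but your mechanism for obtaining the ``$+1$'' in the lower bound has a genuine gap. You propose to make the whole family $\mathcal{H}=\set{A_n}\cup\mathcal{W}_{n/2}$ definitely unbeatable on a set $\Pi$ containing $(n)$ together with (some or all of) $A_n\cap G_0$. This cannot work as stated. If $\Pi\supseteq A_n\cap G_0$, condition (2) of Definition \ref{def:def_unbeat} fails: every $W\in\mathcal{W}_{n/2}$ contains even primary elements (products of two transpositions inside a block, all its odd-order elements, \dots), so $\Pi\cap A_n\cap W\neq\varnothing$. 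If instead you shrink $\Pi\cap A_n$ to elements lying in no member of $\mathcal{W}_{n/2}$, say $p$-cycles for a prime $n/2<p<n$, then condition (3) fails: it requires $\abs{M\cap\Pi}\leq\abs{W\cap\Pi}=(n/2)!\,(n/2-1)!$ for \emph{every} non-member $M$, but a point stabilizer in $\mathcal{X}_1$ contains $\binom{n-1}{p}(p-1)!$ such $p$-cycles, which dwarfs $(n/2)!\,(n/2-1)!$ (for $n=16$, $p=13$: roughly $5\cdot 10^{10}$ versus $2\cdot 10^8$). The paper avoids this trap: it proves that $\mathcal{W}_{n/2}$ \emph{alone} is strongly definitely unbeatable on $(n)$, which yields only $\sigma_0(S_n)\geq\frac{1}{2}\binom{n}{n/2}$, and then gains the extra $1$ by a separate uniqueness argument: if equality held, strong unbeatability would force the minimal primary covering to be exactly $\mathcal{W}_{n/2}$, but by Bertrand's postulate there is a prime $p$ with $n/2<p<n$, and $p$-cycles lie in no member of $\mathcal{W}_{n/2}$---a contradiction. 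Your sketch never produces a working version of this step, so as written it only reaches $\frac{1}{2}\binom{n}{n/2}$.

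A second, smaller problem is your treatment of primitive subgroups in condition (3). The bound $\abs{P}<3^n$ suffices for $n\geq 16$ (indeed $3^{16}<8!\,7!$), but it genuinely fails at $n=8$: $3^8=6561$ while $(n/2)!\,(n/2-1)!=4!\cdot 3!=144$, and $S_8$ really does have a primitive maximal subgroup containing $8$-cycles, namely $\PGL_2(7)$, with $84$ elements of order $8$. So $n=8$ is not merely a small case to be waved at; one must know exactly which primitive groups contain $n$-cycles. The paper handles all $a>2$ uniformly via Jones' theorem: a primitive subgroup meeting $(n)$ with $n=2^a$ satisfies $\PGL_2(q)\leq H\leq\PGammaL_2(q)$ with $q=2^a-1$ a Mersenne prime, and the exact count $2^{a-2}(2^a-1)(2^a-2)<(n/2)!\,(n/2-1)!$ closes that case. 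Relatedly, for imprimitive $M\in\mathcal{W}_d$ with $d\neq n/2$ the order comparison of Lemma \ref{lem:order_max_impr}(1) alone is not enough (it is off by a factor of about $n$, since $\abs{W\cap(n)}=\abs{W}/n$); you need an actual count of $n$-cycles in $S_d\wr S_{n/d}$, which is easy but must be done.
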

\begin{proof} 
A direct inspection shows that $\sigma_0(S_4)=4=
1+\frac{1}{2}{4 \choose 2}$. Thus in the following we assume $a>2$. \\ 
By Lemma \ref{lem:S_n_trivial} we know that 
$\sigma_0(S_n)\leq 1+\frac{1}{2}{n \choose n/2}$.\\
Assume that $H$ is a maximal subgroup of $S_n$. 
Then either 
$H\cap (n)=\varnothing$, or $H$ is an imprimitive subgroup, 
or by \cite[Theorem 3]{Jones} the subgroup $H$ 
satisfies $\PGL_d(q)\leq H
\leq \PGammaL_d(q)$, where $n=(q^d-1)/(q-1)$ for some $d\geq 2$. Note that 
in this last case we necessarily have $d=2$ and $q=2^a-1$ a Mersenne prime.
To see this observe that from the equality $2^a=(q^d-1)/(q-1)$ one easily deduces that $d=2$ so that $2^a=q+1$, now $q$ cannot be a square since $2^a-1 \equiv 3 \mod 4$, and if $q=p^m$ is an odd power of the prime $p$, the usual factorization of $x^m+1=(x+1)(x^{m-1}-x^{m-2}+\ldots +1)$ implies that $q$ must be a prime.
By Lemma \ref{lem:S_n_impr}  and the fact the elements of order $n$ in 
$\mathrm{PGL}(2,q)$ are in number of $2^{a-2}(2^a-1)(2^a-2)$ 
(use \cite[II, Satz 7.3]{Hup}), we deduce in any case that
$$\abs{H\cap (n)}\leq (n/2)!(n/2-1)!$$
with equality if and only if $H\in\mathcal{W}_{n/2}$.
This shows that the set $\mathcal{W}_{n/2}$ is strongly definitely 
unbeatable on $\Pi=(n)$, 
and therefore we obtain that 
$$\sigma_0(S_{n})\geq \frac{1}{2}{n\choose n/2}.$$
To complete this case, assume that 
$\sigma_0(S_{n})= \frac{1}{2}{n\choose n/2}$. 
Then, by the above, the collection $\mathcal{W}_{n/2}$ must be the unique
minimal primary covering for $S_n$.  
By Bertrand's postulate there is a prime number $p$ between 
$n/2$ and $n$; we reach a contradiction by noting that 
$p$-cycles do not belong to imprimitive subgroups of type $\mathcal{W}_{n/2}$.
\end{proof}

We assume now that $n\not\in\set{3^{\epsilon}\cdot 2^a\vert\, \epsilon\in\set{0,1},\, a\geq 0}$.\\
We deal separately with the case $n=5$.

\begin{lem}\label{lem:S_5}
For $n=5$ we have that $\sigma_0(S_5)=1+{5\choose 1}=6$ and
$\set{A_5}\cup \mathcal{X}_1$ is the unique minimal primary covering of $S_5$.
\end{lem}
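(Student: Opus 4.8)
The plan is to verify directly that $\sigma_0(S_5)=6$ by showing both bounds. The upper bound $\sigma_0(S_5)\leq 6$ is already given by Lemma \ref{lem:S_n_trivial}(2), since $5$ is not a power of $2$, so $n_2=1$ and $1+\binom{5}{1}=6$; explicitly the covering $\set{A_5}\cup\mathcal{X}_1$ consists of $A_5$ together with the five point-stabilizers $S_4$, and these cover $(S_5)_0$ because every primary element is either of odd order (hence in $A_5$) or a $2$-element (a transposition or a product of two disjoint transpositions, each of which fixes a point). So the entire content is the matching lower bound, together with the uniqueness claim.

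\emph{First} I would identify a conjugacy class of primary elements on which the intransitive point-stabilizers are forced to be optimal. The natural candidate is $\Pi=(2,2)$, the class of products of two disjoint transpositions: these are exactly the $2$-elements that are \emph{not} covered by $A_5$ and they are the generic obstruction. Each element of $(2,2)$ fixes a unique point, so it lies in exactly one member of $\mathcal{X}_1$ (the stabilizer of its fixed point) and in no other point-stabilizer; this already makes condition (2) of Definition \ref{def:def_unbeat} hold for the family $\mathcal{X}_1$ on $\Pi$, and condition (1) holds since every such element fixes a point. \emph{Then} I would compute $\abs{X_1\cap(2,2)}$: inside $S_4=\Stab(5)$ the number of $(2,2)$-elements of $S_5$ fixing the point $5$ is $3$. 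To establish strong definite unbeatability I must show that every proper subgroup $M$ of $S_5$ not in $\mathcal{X}_1$ satisfies $\abs{M\cap(2,2)}<3$, i.e. contains at most two such elements.

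\emph{The main obstacle} is exactly this third condition: ruling out that any maximal subgroup outside $\mathcal{X}_1$ meets $(2,2)$ in three or more elements. Because $n=5$ is small I would proceed by a direct survey of the maximal subgroups of $S_5$. The maximal subgroups are $A_5$ (which contains $15$ elements of type $(2,2)$, so this approach via $(2,2)$ alone will \emph{not} separate $A_5$, reflecting that $A_5$ is genuinely needed in the covering), the intransitive $X_1\cong S_4$ and $X_2\cong S_2\times S_3$, and the imprimitive/primitive groups — here the relevant proper subgroups are the Frobenius groups $\mathrm{AGL}_1(5)=C_5\rtimes C_4$ of order $20$ (the primitive maximal subgroups, i.e.\ Sylow-$5$ normalizers), which contain no $(2,2)$-element at all since their only involutions are the $4$-cycles' squares acting as single transpositions fixing... . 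Since $(2,2)$ does meet $A_5$, I would instead run the unbeatability argument on the pair $\set{A_5}\cup\mathcal{X}_1$ simultaneously, treating $A_5$ as the fixed member forced by Lemma \ref{lem:solvable_0}(3)-type reasoning: $A_5$ is the unique subgroup covering the $5$-cycles and $3$-cycles efficiently. Concretely I would argue that $\set{A_5}\cup\mathcal{X}_1$ is strongly definitely unbeatable on the union $(5)\cup(2,2)$, where the $5$-cycles pin down $A_5$ (no proper subgroup other than $A_5$ and the order-$20$ Frobenius groups contains a $5$-cycle, and the latter cover too few) and the $(2,2)$-elements pin down the five point-stabilizers; a short check of the remaining maximal subgroups $X_2$ and $\mathrm{AGL}_1(5)$ confirms each meets $(2,2)$ in strictly fewer than $3$ elements, giving uniqueness. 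The whole argument is a finite verification, so rather than invoking the general order estimates of Lemma \ref{lem:order_max_impr}, I expect the cleanest route is this explicit enumeration over the finitely many conjugacy classes of maximal subgroups of $S_5$.
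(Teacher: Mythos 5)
Your proposal rests on a fatal parity error that destroys the lower-bound strategy. Products of two disjoint transpositions are \emph{even} permutations, so the class $(2,2,1)$ is entirely contained in $A_5$; it is emphatically not ``the set of $2$-elements not covered by $A_5$''. The odd $2$-elements of $S_5$ are the ten transpositions and the thirty $4$-cycles (the $4$-cycles are missing from your enumeration of $2$-elements altogether). Since $(5)\cup(2,2,1)\subseteq A_5$, the single proper subgroup $A_5$ already covers your proposed set $\Pi$, so $\sigma(\Pi)=1$ and no unbeatability argument on $\Pi$ can possibly yield the lower bound $6$; in particular condition (3) of Definition \ref{def:def_unbeat} fails drastically for $\mathcal{X}_1$ on $(2,2,1)$, because $\abs{A_5\cap(2,2,1)}=15>3=\abs{X_1\cap(2,2,1)}$. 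There is also a factual error: the Frobenius groups $\mathrm{AGL}_1(5)\cong 5{:}4$ \emph{do} contain $(2,2)$-elements --- the square of a $4$-cycle is a double transposition, and all five involutions of $5{:}4$ are of type $(2,2,1)$; what $5{:}4$ lacks are transpositions.

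Nor is the repair routine: if you replace $(2,2,1)$ by the odd class $(4,1)$, the family $\mathcal{X}_1$ is still \emph{not} definitely unbeatable on it, because $\abs{(5{:}4)\cap(4,1)}=10>6=\abs{X_1\cap(4,1)}$ --- this is exactly why the paper singles out $n=5$ as an exception in Proposition \ref{pro:unbeatable_small} and proves this lemma by a different argument. The paper's proof first forces $A_5\in\mathcal{C}$ by a Sylow count (there are six subgroups of order $5$, and the only proper subgroup of $S_5$ containing more than one of them is $A_5$, so any covering of size at most $5$ must include $A_5$), and then shows that no four further proper subgroups can cover the thirty $4$-cycles and ten transpositions: writing $a_1,a_2,a_3$ for the numbers of subgroups used from $\mathcal{X}_1$, $\mathcal{X}_2$ and the class of Frobenius groups $5{:}4$, the constraints $a_1+a_2+a_3\leq 4$, $3a_1+5a_3\geq 15$, $6a_1+4a_2\geq 10$ admit only the solution $(2,0,2)$, which is then eliminated because the transposition exchanging the two stabilized points remains uncovered. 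Your argument would need to be rebuilt along these lines (a joint count over the two odd classes); the unbeatability machinery alone cannot close the case $n=5$.
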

\begin{proof}
We already know that $\set{A_5}\cup \mathcal{X}_1$ is a primary covering 
for $S_5$ and therefore $\sigma_0(S_5)\leq 6$. Assume by contradiction that 
$\mathcal{C}$ is a primary covering of smaller cardinality. Inside $S_5$ there 
are six subgroups of order $5$, therefore, as $\abs{\mathcal{C}}\leq 5$, there 
exists one element of $\mathcal{C}$ containing at least two different Sylow
$5$-subgroups. 
But the only proper subgroup of $S_5$ containing more than one subgroup of 
order $5$ is $A_5$. Thus $A_5\in \mathcal{C}$ and the remaining members of 
$\mathcal{C}$ 
cover all of the odd $2$-elements of $S_5$, which are thirty $4$-cycles and ten $2$-cycles. 
Any maximal subgroup isomorphic to $S_4$ contains 
precisely six $4$-cycles and six $2$-cycles, any $S_3\times S_2$ contains 
no $4$-cycles and four $2$-cycles and any Frobenius group $5:4$ contains 
ten $4$-cycles and no $2$-cycles.
Therefore, if we assume that $\mathcal{C}$ contains respectively $a_1$ 
subgroups in 
$\mathcal{X}_1$ (that is isomorphic to $S_4$),
$a_2$ subgroups in $\mathcal{X}_2$ (that is isomorphic to $S_3\times S_2$), and 
$a_3$ primitive subgroups isomorphic to $5:4$, we obtain the following system 
of Diophantine inequalities
\begin{equation*}\label{eq_S_5__2}
\begin{cases} 
a_1+a_2+a_3\leq 4 & \\
3a_1+5a_3\geq 15 & \\
6a_1+4a_2\geq 10. & 
\end{cases}
\end{equation*} 
The only integer solution of this system is $(a_1,a_2,a_3)=(2,0,2)$, 
but then if
$\Stab_{S_5}(i)$ and $\Stab_{S_5}(j)$ are the two elements of 
$\mathcal{X}_1$ in $\mathcal{C}$
we have that the permutation $(ij)$ is not covered by elements of 
$\mathcal{C}$, 
which is a contradiction. 
\end{proof}

Let $n\geq 7$ and $n\not\in\set{3^{\epsilon}\cdot 2^a\vert\, \epsilon\in
\set{0,1},\, a\geq 0}$ and write the $2$-adic expansion of $n$ as 
$$n=2^{a_1}+2^{a_2}+\ldots +2^{a_t},$$ 
where $a_1>a_2>\ldots >a_t\geq 0$ and $t\geq 2$. Note that $n_2=2^{a_t}$
and also that when $t=2$ then $a_1\geq a_2+2$.\\ 
We define $\Pi$ to be the following conjugacy class of permutations
$$\Pi=\begin{cases} (2^{a_1},2^{a_2},\ldots,2^{a_t}) 
               & \textrm{if } n\not\equiv t \,\, (\textrm{mod } 2),\\
                     (2^{a_1-1},2^{a_1-1},2^{a_2},\ldots, 2^{a_t}) 
               & \textrm{if } n\equiv t \,\, (\textrm{mod } 2).
\end{cases} $$

The set $\Pi$ consists of odd permutations, that is 
$A_n\cap \Pi=\varnothing$. \\

The computation of $\sigma_0(S_n)$ in this case depends on the following 
proposition.
\begin{pro}\label{pro:unbeatable}
Assume that $n\not\in\set{3{^{\epsilon}}\cdot 2^a\vert\, 
\epsilon\in\set{0,1},\, a\geq 1}$. If $n$ is odd and $n\geq 15$ 
or if $n$ is even, $n\geq 22$ and $n\neq 40$, the collection of subgroups 
$\mathcal{X}_{n_2}$ is strongly definitely unbeatable on $\Pi$. 
\end{pro}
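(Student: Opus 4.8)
The plan is to verify the three conditions of Definition \ref{def:def_unbeat} for the collection $\mathcal{X}_{n_2}$ acting on $\Pi$. First I would establish conditions (1) and (2), which should be the comparatively easy part. For condition (1), the point is that $\Pi$ consists of $2$-elements (each permutation in $\Pi$ is a product of cycles of $2$-power length), so by the argument in Lemma \ref{lem:S_n_trivial}(2) every such element fixes setwise a subset of size $n_2$; more precisely, writing the lengths as distinct powers of $2$ (or using the split pair $2^{a_1-1},2^{a_1-1}$ in the congruent case) one can always select a sub-collection of cycles whose supports have total size exactly $n_2 = 2^{a_t}$, and the corresponding setwise stabilizer lies in $\mathcal{X}_{n_2}$. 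For condition (2), I would show that a permutation in $\Pi$ stabilizing two distinct $n_2$-subsets $\Delta_1 \neq \Delta_2$ is impossible: the orbits of $\langle x \rangle$ are exactly the cycle supports, so a stabilized set is a union of cycle supports of total size $n_2$, and because the parts are distinct $2$-powers summing to $n$ (with the special split handled separately) there is a \emph{unique} such union, namely the one consisting of the $2^{a_t}$-part. This uniqueness is precisely what forces $\Delta_1 = \Delta_2$.

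The main work, and the expected obstacle, is condition (3): showing that every proper subgroup $M$ of $S_n$ not in $\mathcal{X}_{n_2}$ satisfies $|M \cap \Pi| < |X_{n_2} \cap \Pi|$ (strict, since we want \emph{strongly} definitely unbeatable). Here I would split $M$ according to the trichotomy of maximal subgroups: intransitive, imprimitive, and primitive. First I would compute the benchmark $|X_{n_2} \cap \Pi|$ explicitly by counting how the cycles of a fixed type distribute between the block of size $n_2$ and its complement of size $n - n_2$. For an intransitive maximal subgroup $X_m$ with $m \neq n_2$ (and $m < n/2$), one must check that fewer elements of $\Pi$ are stabilized, which reduces to a combinatorial comparison of binomial-type counts; the key structural fact is that the decomposition of $\Pi$-elements forced by a block of size $m$ is more constrained (or less abundant) than for $m = n_2$ precisely because $n_2$ is the unique sub-sum realizable as the smallest part.

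For the imprimitive case I would invoke Lemma \ref{lem:S_n_impr}, which gives $|W \cap \Pi|$ in closed form as $\frac{|\Pi|}{|S_n:W|}\cdot 2^{k-1}$ for $W \in \mathcal{W}_{n/2}$ (after checking the no-subpartition hypothesis, which is exactly the arithmetic content ensuring $\Pi$ has no sub-sum equal to $n/2$), together with Lemma \ref{lem:order_max_impr}(1) bounding the orders of all other imprimitive subgroups by $|W_{n/2}|$; the comparison $|W \cap \Pi| < |X_{n_2} \cap \Pi|$ then becomes an explicit inequality between factorials and binomial coefficients. For the primitive case I would use the order bound from Lemma \ref{lem:order_max_impr}(2), namely $|P| < 2^n$ for $n > 24$ (and $|P| < 3^n$ in general), so that $|P \cap \Pi| \leq |P| < 2^n$, and compare this crude bound against the benchmark $|X_{n_2} \cap \Pi|$; this should dominate by a Stirling-type estimate once $n$ is large, which is why the statement carries the hypotheses $n \geq 15$ (odd) and $n \geq 22$, $n \neq 40$ (even). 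The genuine hard part will be making the intransitive comparison tight and uniform across all $m \neq n_2$: this is a delicate binomial inequality where the excluded small cases ($n = 40$ and small $n$) are almost certainly the places where the naive estimate fails and a separate inspection or refined argument is required.
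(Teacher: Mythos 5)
Your skeleton is the paper's own: conditions (1) and (2) via uniqueness of the sub-sum $n_2$ (both are indeed straightforward, exactly as you argue), and condition (3) via the intransitive/imprimitive/primitive trichotomy, a double-counting benchmark $\abs{X_{n_2}\cap\Pi}=n_2!\,(n-n_2)!/2^{s}$, and order bounds. But there is a genuine gap, and it sits exactly where you declare the problem essentially routine. The paper does not treat the imprimitive and primitive cases by separate refined estimates as you propose; it lumps \emph{every} transitive maximal subgroup $M$ into the single crude bound $\abs{M\cap\Pi}\leq\abs{M}\leq 2\lfloor n/2\rfloor!\,(n-\lfloor n/2\rfloor)!$ (last part of Lemma \ref{lem:order_max_impr}), so that condition (3) reduces to the purely arithmetic inequality $f(n)=2^{s+1}\binom{n}{n_2}\big/\binom{n}{\lfloor n/2\rfloor}<1$. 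Proving this is Lemma \ref{lem:n_2 vs n/2}: a substantial case analysis on $t$ (the number of binary digits of $n$), with separate estimates for $t=2$, $t=3$, $t\geq 4$, and it is the \emph{only} source of the hypotheses $n\geq 15$ (odd), $n\geq 22$, $n\neq 40$ (even). In your proposal this entire step is hidden inside the phrases ``explicit inequality between factorials and binomial coefficients'' and ``Stirling-type estimate,'' with no statement of what inequality must actually be proved; the quantitative heart of the proposition is missing.

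Compounding this, your diagnosis of the bottleneck is inverted. The intransitive comparison, which you call ``the genuine hard part'' and the source of the excluded cases, is clean and uniform in $n$: any admissible block size $2^{c}\neq n_2$ satisfies $n_2<2^{c}<n/2$ (because $2^{a_1}>n/2$), so $c(M)=m_M\binom{n}{n_2}\big/\binom{n}{2^{c}}$ with a small multiplicity factor $m_M$, and monotonicity of binomial coefficients gives $c(M)<1$ with no exceptional $n$ whatsoever. The exceptions all come from the transitive side: at $n=40$ the crude bound fails precisely against $\mathcal{W}_{20}$, and Proposition \ref{pro:unbeatable_small} recovers that case by the exact count of Lemma \ref{lem:S_n_impr} --- the very refinement you propose for $\mathcal{W}_{n/2}$, which, if carried through, is genuinely sharper than the paper's treatment and would even handle $n=40$; while at $n=5$ and $n=10$, where unbeatability really does fail, the beating subgroups are the primitive Frobenius group $5:4$ and the imprimitive class $\mathcal{W}_5$ respectively, never an intransitive subgroup. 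So if you execute your plan, the work must be budgeted for the comparison with $\mathcal{W}_d$ ($d\neq n/2$) and with primitive subgroups at moderate $n$ --- that is where the arithmetic of Lemma \ref{lem:n_2 vs n/2} (or an equivalent) is unavoidable --- and not for the intransitive case, where a two-line binomial argument suffices.
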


For the proof of Proposition \ref{pro:unbeatable} we need 
the following number-theoretic result.
\begin{lem}\label{lem:n_2 vs n/2}
Using the above assumptions and notation, define
%
$$s=\begin{cases} \sum_{i=1}^t a_i +a_1-1 &\textrm{ if } n\equiv t 
                          \,\, (\textrm{mod } 2)\\
                  \sum_{i=1}^t a_i &\textrm{ if } n\not\equiv t \,\, 
                  (\textrm{mod } 2)\end{cases}$$
and 
$$f(n)=2^{s+1}\frac{{n\choose n_2}}{{n\choose \lfloor n/2\rfloor}}.$$
Then $$f(n)< 1$$
holds if and only if either $n$ is odd and $n\geq 15$ or $n$ is even, 
$n\geq 22$ and $n\neq 40$. 
\end{lem}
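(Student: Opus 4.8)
The plan is to recast the claim as the single inequality
$2^{s+1}\binom{n}{n_2} < \binom{n}{\lfloor n/2\rfloor}$, equivalently
$f(n)=2^{s+1}\,\dfrac{\lfloor n/2\rfloor!\,\lceil n/2\rceil!}{n_2!\,(n-n_2)!}<1$,
and then to split the argument into an asymptotic estimate valid for all large $n$ and a finite verification for small $n$. The structural fact driving everything is that excluding $n=2^a$ and $n=3\cdot 2^a$ forces $t\geq 2$ and, when $t=2$, $a_1\geq a_2+2$; hence $n_2=2^{a_t}\leq n/5$, with equality precisely along the family $n=5\cdot 2^{a_2}$ (the case $t=2$, $a_1=a_2+2$). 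More generally, since $a_1>\cdots>a_t$ are distinct one has $n\geq 2^{a_t}(2^{t}-1)=n_2(2^{t}-1)$, so $t\leq 1+\log_2(n/n_2)$. This coupling between the number of binary digits of $n$ (which controls $s$, through $s\leq\sum_{i=1}^t a_i+a_1\leq (t+1)a_1$) and the size of $n_2$ (which controls the binomial ratio) is what makes the estimate close.

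For the asymptotic part I would bound the two factors against each other using Stirling-type estimates, as in the proof of Lemma~\ref{lem:order_max_impr}. Writing $u=n_2/n\in(0,1/5]$, the ratio $\binom{n}{n_2}/\binom{n}{\lfloor n/2\rfloor}$ is dominated by $2^{(H(u)-1)n}$ up to a polynomial factor, where $H(x)=-x\log_2 x-(1-x)\log_2(1-x)$ is the binary entropy; since $H$ is increasing on $(0,1/2]$ and $u\leq 1/5$, this decays exponentially at rate at least $1-H(1/5)>0$. Taking logarithms, $\log_2 f(n)$ is bounded above by $O\!\bigl(\log_2 n\cdot\log_2(n/n_2)\bigr)-\bigl(1-H(u)\bigr)n$, and the point is that the negative term dominates for $n$ large: a larger $n_2$ (which makes $1-H(u)$ small) simultaneously forces $t$, and hence $s$, to be small, while a large $s$ can only occur when $n_2$ is tiny and the entropy decay is very strong. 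This yields $f(n)<1$ for all $n\geq N_0$ with an explicit $N_0$.

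The hard part will be that the entropy gain near the extremal ratio $u=1/5$ is modest, so the margin is thin exactly along the family $n=5\cdot 2^{a_2}$; the crude coupled bound above barely fails to close there (e.g. at $n=80$), so one must either invoke the sharper Stirling estimates $\sqrt{2\pi k}(k/e)^k\leq k!\leq\sqrt{2\pi k}(k/e)^k e^{1/(12k)}$, which supply the missing $1/\sqrt{n}$ factor of the central binomial coefficient, or treat this family directly, e.g. by estimating the ratio $f(5\cdot 2^{a+1})/f(5\cdot 2^{a})$ and checking that it eventually falls below $1$. This analysis both identifies $n=40$ as the largest family member with $f\geq 1$ (whereas $n=80$ already gives $f<1$) and, since every other admissible $n\geq N_0$ has $u<1/5$ strictly, shows no further exception survives. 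Finally I would dispose of the small values by direct evaluation, confirming $f(n)\geq 1$ exactly for the odd $n\in\{5,7,9,11,13\}$ and the even $n\in\{10,14,18,20,40\}$, and $f(n)<1$ for every other admissible $n$ below $N_0$; this matches the asserted thresholds $n\geq 15$ (odd) and $n\geq 22$, $n\neq 40$ (even). It is precisely the parity-dependent definition of $s$ that produces the two different thresholds and confines the single exception to the even case.
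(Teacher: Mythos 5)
Your proposal is correct in outline and rests on the same two pillars as the paper's own proof: the bound $s=O\left((\log_2 n)^2\right)$ coming from $a_i\leq a_1\leq \log_2 n$ and $t\leq O(\log_2 n)$, and the fact that admissibility forces $n_2\leq n/5$, which makes $\binom{n}{n_2}$ exponentially smaller than $\binom{n}{\lfloor n/2\rfloor}$; both arguments then end with a finite verification that produces the exact thresholds and the exception $n=40$. The routes through the middle differ, though. The paper splits the even case according to the number $t$ of binary digits of $n$ --- using $n_2\leq n/15$ when $t\geq 4$, $n_2\leq n/7$ when $t=3$, and only facing the worst ratio $n_2\leq n/5$ when $t=2$ --- and applies the elementary estimates $\binom{n}{k}<(ne/k)^k$ and $\binom{n}{\lfloor n/2\rfloor}\geq 2^n/(n+1)$, obtaining the thresholds $n\geq 43$ (odd case), $n\geq 72$, $n\geq 62$ and $n\geq 114$ respectively, with all intermediate values checked by computation. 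You instead run a single unified estimate in terms of $u=n_2/n\leq 1/5$ and the binary entropy $H$ (an upper bound essentially equivalent to, in fact slightly sharper than, $(ne/k)^k$), and you correctly identify that the crude version of this unified bound just barely fails on the extremal family $n=5\cdot 2^a$ (exactly the case $t=2$, $a_1=a_2+2$; indeed at $n=80$ it misses by a fraction of a bit) and must be repaired there, either by Stirling estimates carrying the $\sqrt{n}$ correction or by a direct treatment of that family. Your version buys a structural explanation of why $n=40$ is the unique large exception --- it lies on the extremal family where the margin is thinnest --- while the paper's case split buys freedom from sharp Stirling estimates, at the price of three parallel thresholds and a larger set of machine-checked values (for instance all even $n\leq 112$ with $t=2$). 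One remark: your ``coupling'' observation (that a large $s$ can only occur when $n_2$ is tiny, hence when the entropy decay is strong) is a pleasant refinement but is not actually needed for the asymptotics, since $1-H(u)\geq 1-H(1/5)>1/4$ uniformly for $u\leq 1/5$ already overwhelms the polylogarithmic growth of $s+1+\log_2(n+1)$; it only serves to lower the explicit $N_0$ and hence shorten the final computation.
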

\begin{proof}
Since $a_i\leq a_1-(i-1)$ for every $i=1,2,\ldots, t$, we have
$$s\leq \sum_{i=1}^t a_i +a_1-1\leq ta_1+a_t-1-\frac{(t-2)(t-1)}{2}.$$
The coefficient $a_t$ satisfies $a_t \leq a_1+1-t$ and therefore 
\begin{equation*}
s\leq\begin{cases} ta_1  &\textrm{ if } n \textrm{ is odd, }\\
                   (t+1)a_1-2 &\textrm{ if } n \textrm{ is even,}
                   \end{cases}
\end{equation*}
where we used the fact that $(t^2-3t+2)/2\geq 0$ and
$(t^2-t+2)/2\geq 2$  for every $t\geq 2$. 
Set $l=\log_2(n) \geq a_1$. Being $t\leq a_1-a_t+1$, 
and according to our assumptions $n\geq 15$ if $n$ is odd
and $n\geq 22$ if $n$ is even, 
we have that $t\leq l$ 
if $n$ is odd and $t\leq l-1$ if $n$ is even. 
Thus we obtain
\begin{equation}\label{eq:n/2_vs_n_2.1}
s\leq\begin{cases} l^2 &\textrm{ if } n \textrm{ is odd, }\\
                   l^2-2 &\textrm{ if } n \textrm{ is even.}
                   \end{cases}
\end{equation} 
By considering the binomial expansion of  
$(1+1)^n$ we have that 
${n\choose \lfloor n/2\rfloor}\geq \frac{2^n}{n+1}$, that is, 
being $l\leq \log_2(n+1)\leq l+1$,
\begin{equation}\label{eq:n/2_vs_n_2.2}
{n\choose \lfloor n/2\rfloor}\geq 2^{n-(l+1)}.
\end{equation}
We distinguish now the different cases.\\

\noindent
Case $n$ odd.\\
Then ${n\choose n_2}=n=2^l$ and (\ref{eq:n/2_vs_n_2.1}) and 
(\ref{eq:n/2_vs_n_2.2}) imply that
$$f(n)\leq 2^{l^2+2l+2-n}.$$
We have that $l^2+2l+2-n< 0$ for every $n\geq 43$. \\
The cases $15\leq n\leq 41$ can be checked by direct computation. 
(Note that $f(n)>1$ for $5\leq n\leq 13$).\\

\noindent
Case $n$ even.\\
Assume first that $t\geq 4$.\\
Then $n \geq 2^{a_t}(1+2+\ldots+2^{t-1}) = n_2(2^t-1) \geq 15 n_2$, hence  
$n_2\leq \lfloor n/15\rfloor$ and 
$${n\choose n_2}\leq {n\choose \lfloor n/15\rfloor}.$$
Note that if $k$ is any non-negative integer then $k^k/k! < e^k$ by the well-known series expansion of the exponential function, therefore if $k \leq n$ then
$${n\choose k} = \frac{n}{k} \cdot \frac{n-1}{k} \cdot \ldots \cdot \frac{n-k+2}{k} \cdot \frac{n-k+1}{k} \cdot \frac{k^k}{k!} < (ne/k)^k.$$
Using the fact that 
for every real number $x$ we have $x-1\leq \lfloor x\rfloor \leq x$, we deduce that 
\begin{equation}\label{eq:n/2_vs_n_2.3}
{n\choose n_2}\leq \left(\frac{15en}{n-15}\right)^{n/15}=
2^{\frac{n}{15}\log_2\left(\frac{15en}{n-15}\right)}.
\end{equation}
Combining (\ref{eq:n/2_vs_n_2.1}), (\ref{eq:n/2_vs_n_2.2}) and 
(\ref{eq:n/2_vs_n_2.3}) we have that
$$f(n)\leq 2^{l^2+l-n\left(1-\frac{1}{15}\log_2\left(\frac{15en}
{n-15}\right)\right)}.$$
Note that $f(n)< 1$ if  $l^2+l-n\left(1-\frac{1}{15}
\log_2\left(\frac{15en}{n-15}\right)\right)< 0$, which 
is true for every $n\geq 72$. 
The cases $22\leq n\leq 70$, with $t\geq 4$, can be checked by a 
direct computation. \\

Let now $t=2$ or $t=3$.\\
In both cases we have that the value of $s$ is bounded from above by 
$3l-3$, because when $t=2$ then $a_1\geq a_2+2$ and so
$$s=2a_1+a_2-1\leq 3a_1-3\leq 3l-3$$
and when $t=3$ then
$$s=a_1+a_2+a_3\leq a_1+a_1-1+a_1-2\leq 3l-3.$$
Moreover, when $t=2$, since $a_1\geq a_2+2$, we have that 
$n_2\leq \lfloor n/5\rfloor$. Therefore, by considerations 
analogous to the ones above, we have that $f(n)< 1$ if
\begin{equation}\label{eq:eq:n/2_vs_n_2.5}
4l-1-n\left(1-\frac{1}{7}\log_2\left(\frac{7en}{n-7}\right)\right)<
0 \,\, \textrm{ when } t=3,
\end{equation}
\begin{equation}\label{eq:eq:n/2_vs_n_2.6}
4l-1-n\left(1-\frac{1}{5}\log_2\left(\frac{5en}{n-5}\right)\right)<
0 \,\, \textrm{ when } t=2.
\end{equation}
Now (\ref{eq:eq:n/2_vs_n_2.5}) holds for every $n\geq 62$, while 
(\ref{eq:eq:n/2_vs_n_2.6})
holds for every $n\geq 114$.\\ 
As above, the intermediate cases can be checked by computation,
the only exceptions being all $n$ even with $n\leq 20$ and $n=40$. 
\end{proof}

\noindent 
{\it{Proof of Proposition \ref{pro:unbeatable}.}}
To prove that $\mathcal{X}_{n_2}$ is strongly definitely 
unbeatable on $\Pi$ we need to show that the three conditions 
of the Definition \ref{def:def_unbeat} are satisfied.
Conditions (1) and (2) are straightforward (note that when 
$t=2$ we have that $a_1-1>a_2$). We show condition (3), that is, 
for every $X_{n_2}\in\mathcal{X}_{n_2}$ and 
every maximal subgroup $M$ of $S_n$, which is not the  
stabilizer of a $n_2$-subset, the proportion $$c(M):=\frac{\abs{M\cap \Pi}}{\abs{X_{n_2}\cap \Pi}}=\frac{\abs{M}}{\abs{X_{n_2}}}m_M< 1,$$
where $m_M$ denotes the number of conjugates of $M$ containing a 
fixed element of $\Pi$. The above expression for $c(M)$ was obtained via a double counting, as in the proof of Proposition \ref{pro:S_n n=2power}, applied to compute both $|M \cap \Pi|$ and $|X_{n_2} \cap \Pi|$, keeping in mind that a given element of $\Pi$ belongs to exactly one member of $\mathcal{X}_{n_2}$.

Assume first that $M$ is an intransitive maximal subgroup not 
in $\mathcal{X}_{n_2}$.\\ 
If $M\cap \Pi\neq \varnothing$ we necessarily 
have that $M$ is the stabilizer of a union of disjoint $2$-power sized subsets, 
say $A_i$ for some $i\geq 1$. Since $\abs{M\cap \Pi}\leq 
\abs{\Stab(A_1)\cap \Pi}$, we can assume that $M$ coincides with 
the stabilizer of a single subset of cardinality some power of $2$, 
say $2^c$, with $n_2< 2^c<n/2$, being $2^{a_1} \geq n/2$, and therefore $c\in\set{a_2,\ldots,a_{t-1}}$ 
when $n\not\equiv t$ (mod $2$) and $c\in\set{a_1-1,a_2,\ldots,a_{t-1}}$ 
when $n\equiv t$ (mod $2$). 
Now, except in the case $n\equiv t$ (mod $2$) and $c=a_1-1$, 
we have that $m_M=1$ and 
$$c(M)=\frac{{n\choose n_2}}{{n\choose 2^{c}}}<1,$$
since $n_2=2^{a_t}<2^c$.
Otherwise $m_M=2$ and, since $a_t\leq a_1-2$, we obtain
$$c(M)=2\cdot \frac{{n\choose 2^{a_t}}}{{n\choose 2^{a_1-1}}}
       \leq 2\cdot \frac{{n\choose 2^{a_1-2}}}{{n\choose 2^{a_1-1}}}<1.$$
 
Assume now that $M$ is a primitive or an imprimitive maximal subgroup
of $S_n$.
Then by Lemma \ref{lem:order_max_impr}, 
$$\abs{M\cap \Pi}\leq\abs{M}\leq  
\frac{2n!}{{n\choose \lfloor n/2\rfloor}},$$ 
while
$$\abs{X_{n_2}\cap \Pi}=\frac{n_2!(n-n_2)!}{2^s},$$
where $s=\sum_{i=1}^t a_i$ 
if $n\not\equiv t$ (mod $2$), and $s=\sum_{i=1}^t a_i+a_1-1$ 
if $n\equiv t$ (mod $2$).
Therefore 
$$c(M)=\frac{\abs{M\cap \Pi}}{\abs{X_{n_2}\cap \Pi}}
\leq \frac{\abs{M}}{\abs{X_{n_2}\cap \Pi}}\leq 
2^{s+1}\cdot\frac{{n\choose n_2}}{{n\choose \lfloor n/2\rfloor}}$$
Lemma \ref{lem:n_2 vs n/2} proves that $c(M)<1$, whenever 
$n$ is odd and $n\geq 15$, or $n$ is even and $n\geq 22$ and $n\neq 40$. \hfill $\square$\\

We treat now the case when $n\leq 20$ or $n=40$. 
\begin{pro}\label{pro:unbeatable_small}
For $n\in\set{5,7,9,10,11,13,14,18,20,40}$, the collection 
of subgroups $\mathcal{X}_{n_2}$ is strongly definitely unbeatable on $\Pi$
if and only if $n\not\in\set{5, 10}$. 
\end{pro}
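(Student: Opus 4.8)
The plan is to check the three conditions of Definition \ref{def:def_unbeat} for each of the ten degrees, the only real content being condition (3). First I would dispose of (1) and (2): exactly as in the proof of Proposition \ref{pro:unbeatable}, every $\pi\in\Pi$ has a \emph{unique} cycle of the minimal $2$-power length $n_2$ (in the modified case $n\equiv t$ the hypotheses $a_1-1>a_2$ when $t=2$ and $a_2>a_t$ when $t\ge 3$ guarantee that $2^{a_t}$ is still the strictly smallest cycle length), so the support of that cycle is the only $n_2$-subset stabilised by $\pi$. Hence $\pi$ lies in exactly one member of $\mathcal{X}_{n_2}$, and conditions (1) and (2) hold for all ten values. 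This also yields the exact value $\abs{X_{n_2}\cap\Pi}=n_2!\,(n-n_2)!/\abs{C}$, where $\abs{C}$ is the centraliser order of an element of $\Pi$; I would stress that when $a_2=a_1-1$ (as for $n=7,13$) the two middle cycle lengths coincide and $\abs{C}$ \emph{exceeds} the value $2^{s}$ used in Proposition \ref{pro:unbeatable}, so here $\abs{C}$ must be read off the genuine cycle type.

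The whole question is then the strict inequality $\abs{M\cap\Pi}<\abs{X_{n_2}\cap\Pi}$ for every maximal $M\notin\mathcal{X}_{n_2}$. For intransitive $M$ this is the purely binomial estimate in the proof of Proposition \ref{pro:unbeatable}, which never invokes $n\ge 15$ or $n\ge 22$ and so applies verbatim; where it is tight one checks the inequality $\binom{n}{2^{a_1-1}}>2\binom{n}{2^{a_1-2}}$ by hand. For primitive $M$ I would use $\abs{M\cap\Pi}\le\abs{M}$ together with Lemma \ref{lem:order_max_impr}(2): the largest proper primitive subgroup of $S_n$ in each of these degrees (read off the classification of primitive groups of small degree) has order far below $\abs{X_{n_2}\cap\Pi}$ — the slack is enormous for $n=40$, where $\abs{X_8\cap\Pi}=8!\cdot 32!/2^{12}$ dwarfs $2^{40}$, and still comfortable in the delicate degrees $n=13,14$ once one uses the true maximal order (e.g.\ $\abs{\mathrm{PSL}(3,3)}=5616$ for $n=13$) rather than the weak bound $3^n$. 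Many such groups anyway lie in $A_n$, or contain no element of the $2$-power order of $\Pi$, hence meet $\Pi$ trivially.

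The heart of the matter is the imprimitive case, which only concerns the even degrees $n\in\{14,18,20,40\}$: the odd degrees $7,9,11,13$ have no balanced subgroup $W_{n/2}$ at all (the primes have no imprimitive maximal subgroups, and for $n=9$ one checks $W_3\cap\Pi=\varnothing$ directly). For $W_d$ with $\abs{W_d}<\abs{X_{n_2}\cap\Pi}$ the bound $\abs{M\cap\Pi}\le\abs{M}$ suffices, and by Lemma \ref{lem:order_max_impr}(1) the only subgroup whose order exceeds $\abs{X_{n_2}\cap\Pi}$ is the balanced $W_{n/2}\cong S_{n/2}\wr S_2$. I would split it into its two cosets: an element fixing both blocks has cycle type the disjoint union of two cycle types of $S_{n/2}$, and a short pigeonhole argument shows the long cycles of $\Pi$ cannot be distributed among two blocks of size $n/2$, so such elements never meet $\Pi$; an element swapping the blocks squares to a permutation of one block and has cycle type exactly twice that square, whence $\abs{W_{n/2}\cap\Pi}=(n/2)!\cdot\abs{\Pi'}$, where $\Pi'\subseteq S_{n/2}$ is the halved class whose double is $\Pi$. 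A direct comparison gives strict inequality throughout (for instance $\abs{W_7\cap\Pi}=7!\cdot\abs{(4,2,1)}=3175200<14968800=\abs{X_2\cap\Pi}$ when $n=14$).

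Finally I would exhibit the two genuine failures that pin down the ``if and only if''. For $n=5$ there are no imprimitive subgroups, but the primitive Frobenius group $\mathrm{AGL}(1,5)=F_{20}$ contains all $10$ of its $4$-cycles, against only $3!=6$ in a point stabiliser $S_4$; for $n=10$ the block-swap count gives $\abs{W_5\cap\Pi}=5!\cdot\abs{(2,2,1)}=120\cdot 15=1800$, against $\abs{X_2\cap\Pi}=2!\cdot 8!/64=1260$. In both cases $\abs{M\cap\Pi}>\abs{X_{n_2}\cap\Pi}$, so condition (3) fails outright (not merely in its strict form) and $\mathcal{X}_{n_2}$ is not definitely unbeatable; for the remaining eight degrees the estimates above are strict, giving strong definite unbeatability. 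I expect the main obstacle to be the bookkeeping in the imprimitive/primitive comparisons for the borderline degrees $n=13,14$ (and the nominal abundance of primitive groups in degree $40$, which are nonetheless all annihilated by the order bound), rather than any single hard inequality.
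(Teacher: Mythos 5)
Your route is the same as the paper's --- conditions (1) and (2) from the uniqueness of the smallest cycle, intransitive subgroups delegated to the binomial estimates in the proof of Proposition \ref{pro:unbeatable}, primitive subgroups killed by order bounds, the balanced wreath product counted exactly (your block-swap formula $\abs{W_{n/2}\cap\Pi}=(n/2)!\,\abs{\Pi'}$ gives the same number as Lemma \ref{lem:S_n_impr}), and the same witnesses ($5{:}4$ and $\mathcal{W}_5$) for the failures at $n=5,10$. But there is a genuine gap, and it sits precisely at the coincidence $a_2=a_1-1$ that you yourself flagged for $n\in\set{7,13}$: you applied it to the centraliser of $\Pi$ but not to the multiplicity $m_M$ in the intransitive estimate. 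When $n\equiv t \pmod 2$ and $a_2=a_1-1$, an element of $\Pi$ has \emph{three} cycles of length $2^{a_1-1}$, hence stabilises three (not two) sets of that size, so the hand check has to be $3\binom{n}{n_2}<\binom{n}{2^{a_1-1}}$ rather than your $2\binom{n}{2^{a_1-2}}<\binom{n}{2^{a_1-1}}$. For $n=13$ this correction is harmless ($3\binom{13}{1}=39<715=\binom{13}{4}$), but at $n=7$ it fails, with equality: $3\binom{7}{1}=21=\binom{7}{2}$. Concretely, $\Pi=(2,2,2,1)$ and for $M=\Stab_{S_7}(\set{1,2})\in\mathcal{X}_2$ one has $\abs{M\cap\Pi}=15=\abs{X_1\cap\Pi}$, because an element of $\Pi$ stabilising $\set{1,2}$ must contain the transposition $(1\,2)$ (it has only one fixed point), and there are exactly $15$ permutations of type $(2,2,1)$ on the remaining five points. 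So at $n=7$ condition (3) holds only non-strictly, $\mathcal{X}_1$ is \emph{not} strongly definitely unbeatable on $\Pi$, and your step ``applies verbatim'' is exactly where the proof breaks.

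This failure is real, not an artifact of a weak estimate: the seven maximal subgroups $\Stab_{S_7}(\set{1})$ and $\Stab_{S_7}(\set{1,x})$, $2\leq x\leq 7$, also cover $\Pi$ (each element of $\Pi$ either fixes $1$ or carries $1$ in one of its transpositions), so the minimal cover of $\Pi$ by maximal subgroups is not unique --- which strong definite unbeatability would forbid, by the remark following Definition \ref{def:def_unbeat}. You should not feel singled out: the paper's own proof has the identical flaw, since it too cites the intransitive analysis of Proposition \ref{pro:unbeatable}, whose exceptional case asserts $m_M=2$ where the true value is $3$; thus the proposition as printed is in fact false at $n=7$, while plain (non-strict) definite unbeatability survives, so $\sigma(\Pi)=7$ remains true. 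A correct treatment must single out $n=7$, and Proposition \ref{pro:S_n, n_not_32^a} then needs a supplementary argument for that degree, e.g.\ observing that any size-$7$ cover of $\Pi$ must consist of subgroups meeting $\Pi$ in exactly $15$ elements with disjoint intersections, hence is either $\mathcal{X}_1$ or one of the seven ``stars'' above, and none of these covers the $7$-cycles, so that $\sigma_0(S_7)=8$ all the same.
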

\begin{proof}
As in the proof of Proposition \ref{pro:unbeatable}, the conditions (1) and (2) 
of Definition \ref{def:def_unbeat} are straightforward to prove, so we limit ourselves to show that condition (3) holds.

In the sequel, denote by $M$ a maximal subgroup of $S_n$ 
not in $\mathcal{X}_{n_2}\cup\set{A_n}$.

If $M$ is intransitive the inequality $|M \cap \Pi| \leq |X_{n_2} \cap \Pi|$ 
holds in all cases, with equality if and only if
$M\in\mathcal{X}_{n_2}$ (see the proof of Proposition \ref{pro:unbeatable}). So we may concentrate on the imprimitive and 
primitive maximal subgroups. We treat the various cases separately.

Case $n=5$. 
Then $\Pi=(4,1)$ and if we take $M$ to be a 
primitive subgroup isomorphic to the Frobenius group $5:4$, we have that 
$$\abs{M\cap \Pi}=10>6=\abs{X_{n_2}\cap \Pi}.$$ 
Therefore $\mathcal{X}_{n_2}$ is not unbeatable on $\Pi$.

Case $n=7$.  
Then $\Pi=(2,2,2,1)$ and $|X_{n_2} \cap \Pi|=15$. There is only one class of 
transitive (primitive) maximal subgroups of $G=S_7$ (not containing $A_7$).
They have order $42$ and their intersection with $\Pi$ has size $7$.

Case $n=9$. 
Then $\Pi=(8,1)$ and $|X_{n_2} \cap \Pi|=7!$, which is larger than $432$, 
the maximal size of a primitive subgroup of $S_9$ not containing $A_9$.
Since subgroups in $\mathcal{W}_3$ have trivial intersection with $\Pi$, 
we have that $\mathcal{X}_{n_2}$ is unbeatable on $\Pi$.

Case $n=10$. Then $\Pi=(4,4,2)$ and if $M\in \mathcal{W}_{5}$ we have, by Lemma \ref{lem:S_n_impr},
$$\abs{M\cap \Pi}=1.800>1.260=\abs{X_{n_2}\cap \Pi}, $$
which shows that $\mathcal{X}_{n_2}$ is not unbeatable on $\Pi$.

Case $n=11$. 
We have that $\Pi=(4,4,2,1)$ and $|X_{n_2} \cap \Pi| = 56.700,$ which is larger 
than $110$, the maximal size of a primitive subgroup of $S_{11}$ not containing $A_{11}$.

Case $n=13$. 
Then $\Pi=(4,4,4,1)$ and $|X_{n_2} \cap \Pi| \geq 1,24\cdot 10^{6}$.
This number is larger than the maximal size of a primitive subgroup 
of $S_{13}$ not containing $A_{13}$, which is $156$. 

Case $n=14$.
Then $\Pi=(8,4,2)$ and $|X_{n_2} \cap \Pi| \geq 1,4 \cdot 10^{7}$.
If $M$ is a primitive maximal subgroup of $S_{14}$ not containing $A_{14}$ then 
$|M \cap \Pi| < |M| < 3^{14} < |X_{n_2} \cap \Pi|$.
Assume that $M$ is imprimitive. If $M\in\mathcal{W}_2$, then 
$$|M \cap \Pi| \leq |M| = 2^7 \cdot 7! = 645.120 < |X_{n_2} \cap \Pi|.$$
If $M\in\mathcal{W}_7$ then by Lemma \ref{lem:S_n_impr},
$$|M \cap \Pi| = \frac{|\Pi|}{|G:M|} \cdot 2^2 = 3.175.200 
< |X_{n_2} \cap \Pi|.$$  

Case $n=18$. 
Then $\Pi=(8,8,2)$ and $|X_{n_2} \cap \Pi| = \frac{16!}{2^7} \geq 1,63\cdot 10^{11}$. 
If $M$ is a primitive maximal subgroup of $S_{18}$ not containing $A_{18}$, then 
$$|M \cap \Pi| < |M| < 3^{18} = 387.420.489 < |X_{n_2} \cap \Pi|.$$
If $M$ is an imprimitive maximal subgroup of $S_{18}$, not in 
$\mathcal{W}_9$, then 
$$|M \cap \Pi| < |M|\leq (6!)^3 \cdot 3! 
      \leq 2,24\cdot 10^{9}< |X_{n_2} \cap \Pi|.$$
If $M\in\mathcal{W}_9$ then by Lemma \ref{lem:S_n_impr},
$$|M \cap \Pi| = \frac{|\Pi|}{|G:M|} 2^2 \leq 4,12 \cdot 10^{9}
                              < |X_{n_2} \cap \Pi|.$$ 

Case $n=20$. 
Then $\Pi=(8,8,4)$ and $|X_{n_2} \cap \Pi| = \frac{16!}{2^7} \cdot 3!
\geq 9,8\cdot 10^{11}$. \\
If $M$ is a primitive maximal subgroup of $S_{20}$ not containing $A_{20}$, then 
$$|M \cap \Pi| \leq 3^{20} \leq 3,49\cdot 10^{9} < |X_{n_2} \cap \Pi|.$$ 
If $M$ is an imprimitive maximal subgroup of $G$, not in $\mathcal{W}_{10}$, 
then 
$$|M \cap \Pi| \leq |M| \leq |S_5 \wr S_4| = (5!)^4 4! 
\leq 4,98\cdot 10^{9}< |X_{n_2} \cap \Pi|.$$
If $M\in\mathcal{W}_{10}$ then by Lemma \ref{lem:S_n_impr}, 
$$|M \cap \Pi| = \frac{|\Pi|}{|G:M|} 2^2 
\leq 2,06\cdot 10^{11}  < |X_{n_2} \cap \Pi|.$$ 

Case $n=40$. 
Then $\Pi=(16,16,8)$ and $|X_{n_2} \cap \Pi| = \frac{32!}{2^9} \cdot 7!\geq 2,59\cdot 10^{36}$. 
If $M$ is a primitive maximal subgroup of $S_{40}$ not containing $A_{40}$ then 
$|M \cap \Pi|<|M|\leq 2^{40}\approx  1,10 \cdot 10^{12}< |X_{n_2} \cap \Pi|$. If $M$ is an imprimitive maximal subgroup of $G$, not in $\mathcal{W}_{20}$, then 
$$|M \cap \Pi| < |M| \leq |S_{10} \wr S_4| = 10!^4 \cdot 4! \leq 4,17\cdot 10^{27}< |X_{n_2} \cap \Pi|.$$ 
If $M\in\mathcal{W}_{20}$ then by Lemma \ref{lem:S_n_impr},
$$|M \cap \Pi| = \frac{|\Pi|}{|G:M|} 2^2 \leq 1,16 \cdot 10^{34}
< |X_{n_2} \cap \Pi|.$$
The proof is now complete.
\end{proof}

\begin{lem}\label{lem:S_10}
For $n=10$ we have that $\sigma_0(S_{10})= 46$.
\end{lem}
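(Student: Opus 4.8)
The plan is to pair the general upper bound with a lower bound that has to be argued by hand, since for $n=10$ the clean ``definitely unbeatable'' mechanism of Definition \ref{def:def_unbeat} is unavailable. Since $10$ is not a power of $2$ and $n_2=2$, Lemma \ref{lem:S_n_trivial}(2) already gives $\sigma_0(S_{10})\le 1+\binom{10}{2}=46$, realised by $\set{A_{10}}\cup\mathcal{X}_2$, so everything reduces to proving $\sigma_0(S_{10})\ge 46$. The first reduction I would make is to isolate what must be covered outside $A_{10}$: every element of odd prime-power order is an even permutation and hence lies in $A_{10}$, so the only primary elements that force subgroups other than $A_{10}$ are the \emph{odd} $2$-elements. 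Listing the cycle types with parts in $\set{1,2,4,8}$ having an odd number of nontrivial cycles, one finds exactly the seven classes $(8,1,1)$, $(4,4,2)$, $(4,2,2,1,1)$, $(4,1^6)$, $(2^5)$, $(2^3,1^4)$, $(2,1^8)$; a primary covering is $A_{10}$ (or a substitute) together with a family covering these seven classes.

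Next I would run the generic argument and locate exactly where it breaks. On the signature class $\Pi=(4,4,2)$ the family $\mathcal{X}_2$ partitions $\Pi$ (each element has a unique invariant $2$-set, namely its $2$-cycle) with $\abs{X_2\cap\Pi}=1260$, while by Lemma \ref{lem:S_n_impr} each element lies in $4$ members of $\mathcal{W}_5$ and $\abs{W\cap\Pi}=1800$; as recorded in the proof of Proposition \ref{pro:unbeatable_small}, this is exactly why condition (3) of Definition \ref{def:def_unbeat} fails and I cannot conclude that all $45$ members of $\mathcal{X}_2$ are forced. The main obstacle is precisely this failure: a counting bound based on $(4,4,2)$ alone yields only $\lceil 56700/1800\rceil+1=33$, because the imprimitive subgroups in $\mathcal{W}_5$ are individually \emph{more} efficient on this single class. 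In other words, the bound $46$ cannot come from any one class nor from a naive linear-programming relaxation — there is a genuine integrality gap — and this is the step I expect to be hardest.

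To overcome it I would exploit that a single covering must handle all seven classes simultaneously, and that the subgroups which are efficient on $(4,4,2)$ are useless elsewhere. Concretely, for each maximal subgroup type $T$ (the intransitive $\mathcal{X}_m$ with $1\le m\le 4$, the imprimitive $\mathcal{W}_2$ and $\mathcal{W}_5$, and the finitely many primitive groups, whose orders are bounded by Lemma \ref{lem:order_max_impr}) I would compute $\abs{T\cap\Pi_i}$ for each of the seven classes $\Pi_i$, using Lemma \ref{lem:S_n_impr} for the $\mathcal{W}$-types and direct counts otherwise, and verify that on $(4,4,2)$ the only maximal subgroups with $\abs{M\cap\Pi}\ge 1260$ are those in $\mathcal{X}_2$ (equality) and $\mathcal{W}_5$ (value $1800$). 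The key complementarity to extract is that $\mathcal{W}_5$ meets the classes with many fixed points in nothing comparable, whereas $\mathcal{X}_1$ and $\mathcal{X}_3$ meet $(4,4,2)$ and $(2^5)$ in nothing at all; hence trading $\mathcal{X}_2$-subgroups for $\mathcal{W}_5$-subgroups to economise on $(4,4,2)$ leaves the fixed-point classes — notably the $45$ transpositions, together with $(4,1^6)$ and $(8,1,1)$ — badly undercovered and drives the total back up.

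I would then phrase the whole thing as an integer covering problem: minimise the number of chosen subgroups subject to covering each of the seven classes, with the computed intersection numbers as data. The claim $\sigma_0(S_{10})=46$ is equivalent to showing that this integer optimum equals $45$ even though its fractional relaxation is strictly smaller, and I expect that pinning down this trade-off rigorously is most safely completed by an explicit (computer-assisted) verification of the integer program over the few classes and subgroup types just listed. Finally, the extra $+1$ is accounted for by adding the $9$-cycles as a further class to be covered: being even permutations with a single fixed point, they lie in no member of $\mathcal{X}_2$ and in no imprimitive or primitive maximal subgroup, so covering them forces either $A_{10}$ or the ten point stabilisers, giving one subgroup beyond those used for the seven $2$-element classes and completing the bound $\sigma_0(S_{10})\ge 46$.
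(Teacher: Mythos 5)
Your overall architecture matches the paper's: the upper bound $46$ from Lemma \ref{lem:S_n_trivial}, one forced extra subgroup coming from a class that the subgroups covering the odd $2$-elements cannot touch, and a computer-assisted lower bound of $45$ for covering the odd $2$-elements. Your ``$+1$'' via $9$-cycles is a legitimate variant of what the paper does (the paper instead uses the class $(5,5)$: any maximal subgroup other than $A_{10}$ meets it in at most $576$ of its $72.576$ elements, the maximum being realized by $\mathcal{W}_5$, so a covering avoiding $A_{10}$ has at least $126>46$ members); note only that in your version you must also dispose of the branch in which all ten point stabilizers are chosen instead of $A_{10}$, which works because point stabilizers miss the fixed-point-free class $(4,4,2)$ entirely, so that branch costs at least $10+45=55$.

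The genuine gap is in the decisive step, the lower bound of $45$. You propose to ``minimise the number of chosen subgroups subject to covering each of the seven classes, with the computed intersection numbers as data'' and to verify this ``over the few classes and subgroup types just listed''. An integer program whose variables are indexed by subgroup \emph{types} and whose data are the cardinalities $\abs{T\cap\Pi_i}$ can only encode counting constraints of the form $\sum_T x_T\abs{T\cap\Pi_i}\geq\abs{\Pi_i}$, and such constraints cannot see how distinct conjugate subgroups overlap on a class: for $(4,4,2)$ they give only $\lceil 56700/1800\rceil=32$, and adjoining the remaining six classes raises this by very little, since the fixed-point classes are small and cheaply ``covered'' in the counting sense. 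In other words, the integrality gap you correctly identify is invisible to any argument whose input is intersection cardinalities, however those are combined across classes. What the paper actually computes (with GAP and Gurobi) is the genuine set-cover integer program: one $0$--$1$ variable $r_M$ for each individual maximal subgroup $M$ of $S_{10}$, and one constraint $\sum_{M \in \mathscr{M}_x} r_M\geq 1$ for each of the $56.700$ elements $x$ of the single class $(4,4,2)$; its optimum is $45$. This also shows that your proposed mechanism --- a trade-off between $(4,4,2)$ and the fixed-point classes --- is not what drives the answer: the class $(4,4,2)$ by itself already requires $45$ subgroups, the obstruction being the way conjugates of $W_5$ and $X_2$ overlap on that one class, not their inefficiency elsewhere. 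Your plan is therefore repairable, but only by replacing the type-level program by the element-level one.
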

\begin{proof}
We already know that $\set{A_{10}}\cup \mathcal{X}_2$ is a primary covering 
and therefore $\sigma_0(S_{10})\leq 46$. 
Assume by contradiction that $\mathcal{C}$ is a primary covering of smaller cardinality. $A_{10}$ belongs to $\mathcal{C}$ because the maximal intersection of a maximal subgroup of $S_{10}$ distinct from $A_{10}$ with the set of the $72.576$ elements of cycle structure $(5,5)$ is $576$, realized by the class of subgroups $\mathcal{W}_5$, and $72.576/576 = 126 > 46 \geq \sigma_0(S_{10})$. 
To conclude it is enough to show that at least $45$ maximal subgroups are needed to cover the elements of cycle type $(4,4,2)$. We were able to do this using the programs
\cite{gap,gurobi}. More specifically, this works as follows. Let $\mathscr{M}$ be the set of maximal subgroups of $G=S_{10}$ and let $C$ be the conjugacy class of elements of cycle structure $(4,4,2)$ in $G$. Let $r_M$ be a variable for every $M \in \mathscr{M}$ and, for every $x \in C$, define $\mathscr{M}_x := \{M \in \mathscr{M}\ :\ x \in M\}$. We found that
$$\min \Big\{\sum_{M \in \mathscr{M}} r_M\ \Big\vert\ r_M \in \{0,1\}\ 
\forall M \in \mathscr{M},\ \sum_{M \in \mathscr{M}_x} r_M \geq 1\ \forall x \in C\Big\} = 45.$$
For this, \cite{gap} was used to compute the sets $\mathscr{M}_x$ and \cite{gurobi} was used to solve the optimization problem.
\end{proof}

We can now complete this case.
\begin{pro}\label{pro:S_n, n_not_32^a}
Assume that $n\neq 10$ and $n\neq 3^{\epsilon}2^a$, 
for every $\epsilon\in\set{0,1}$ and every $a\geq 0$. Then $\sigma_0(S_n)= 1+ {n\choose n_2}$.
\end{pro}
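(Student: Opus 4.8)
The plan is to sandwich $\sigma_0(S_n)$ between $1+{n\choose n_2}$ from above and the same quantity from below, the lower bound being the substantial direction. First I would dispose of the value $n=5$, which satisfies the hypotheses but for which $\mathcal{X}_{n_2}$ fails to be unbeatable on $\Pi$ (see Proposition \ref{pro:unbeatable_small}): here Lemma \ref{lem:S_5} already gives $\sigma_0(S_5)=6=1+{5\choose 1}$, as required, so from now on I assume $n\geq 7$. Since $n$ is not a power of $2$, Lemma \ref{lem:S_n_trivial}(2) yields the upper bound $\sigma_0(S_n)\leq 1+{n\choose n_2}$, realised by $\set{A_n}\cup\mathcal{X}_{n_2}$. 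For the lower bound I would first reduce to coverings by maximal subgroups: given a minimal primary covering $\mathcal{C}$, replacing each member by a maximal subgroup containing it produces a primary covering of size at most $\abs{\mathcal{C}}$, which by minimality must again have size $\abs{\mathcal{C}}=\sigma_0(S_n)$ and consist of pairwise distinct maximal subgroups.

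Next I would feed in the unbeatability results. For every $n$ in the range under consideration (odd $n\geq 15$, or even $n\geq 22$ with $n\neq 40$, by Proposition \ref{pro:unbeatable}; and $n\in\set{7,9,11,13,14,18,20,40}$ by Proposition \ref{pro:unbeatable_small}) the collection $\mathcal{X}_{n_2}$ is strongly definitely unbeatable on $\Pi$. By the remark following Definition \ref{def:def_unbeat} this gives $\sigma(\Pi)=\abs{\mathcal{X}_{n_2}}={n\choose n_2}$ and, crucially, that $\mathcal{X}_{n_2}$ is the \emph{unique} minimal covering of $\Pi$ by maximal subgroups. The key extra ingredient is an even primary element lying outside $\bigcup\mathcal{X}_{n_2}$: letting $p$ be any odd prime divisor of $n$ (which exists as $n$ is not a power of $2$), I would take $g$ to be a product of $n/p$ disjoint $p$-cycles. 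Then $g$ has order $p$, so $g\in G_0$; it is an even permutation, since each $p$-cycle is even, so $g\in A_n$; and all its cycles have the odd length $p$, so every $g$-invariant subset of $\Omega$ has cardinality a multiple of $p$, hence never equal to the $2$-power $n_2$. Consequently $g$ belongs to no setwise stabiliser of an $n_2$-subset, i.e. $g\notin\bigcup\mathcal{X}_{n_2}$.

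Finally I would combine these. Let $\mathcal{C}$ be a minimal primary covering, taken to consist of maximal subgroups, and set $\mathcal{C}'=\mathcal{C}\setminus\set{A_n}$. Since $\Pi$ consists of odd permutations we have $A_n\cap\Pi=\varnothing$, so $\mathcal{C}'$ already covers $\Pi$ and therefore $\abs{\mathcal{C}'}\geq\sigma(\Pi)={n\choose n_2}$. If $A_n\in\mathcal{C}$ then $\sigma_0(S_n)=\abs{\mathcal{C}'}+1\geq 1+{n\choose n_2}$. If instead $A_n\notin\mathcal{C}$, then $\mathcal{C}=\mathcal{C}'$ covers $G_0\ni g$; were $\abs{\mathcal{C}}={n\choose n_2}$, uniqueness would force $\mathcal{C}=\mathcal{X}_{n_2}$, which does not contain $g$, a contradiction, so again $\abs{\mathcal{C}}\geq 1+{n\choose n_2}$. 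Either way the lower bound holds and the proposition follows. I expect the main obstacle to be organisational rather than computational: the genuinely hard estimates are packaged in Propositions \ref{pro:unbeatable} and \ref{pro:unbeatable_small}, so the delicate points here are the clean reduction to maximal subgroups, the verification that the cited propositions jointly cover every admissible $n\geq 7$, and the production of a single uniform primary element $g$ that forces the presence of $A_n$ and hence the additive $+1$.
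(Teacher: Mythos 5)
Your proposal is correct and follows essentially the same route as the paper: upper bound from Lemma \ref{lem:S_n_trivial}, lower bound ${n\choose n_2}$ from the unbeatability results (Propositions \ref{pro:unbeatable} and \ref{pro:unbeatable_small}, with $n=5$ deferred to Lemma \ref{lem:S_5}), and the extra $+1$ because $\mathcal{X}_{n_2}$ misses fixed-point-free primary elements, so equality is impossible. The paper states this last step more tersely (citing Kantor's theorem or the fixed-point-free elements without constructing one); your explicit element $g$, the reduction to maximal subgroups, and the case split on $A_n\in\mathcal{C}$ are just careful spellings-out of the same argument.
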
 
\begin{proof}

\noindent
The case $n=5$ has been done in Lemma \ref{lem:S_5}.\\
By Propositions \ref{pro:unbeatable}
and \ref{pro:unbeatable_small} we have that 
$$\sigma_0(S_n)\geq \sigma(\Pi)=\abs{\mathcal{X}_{n_2}}={n\choose n_2}.$$ 
Moreover, if it were $\sigma_0(S_n)={n\choose n_2}$ then, by 
the strongly definitely unbeatable property, $\mathcal{X}_{n_2}$ would 
be a primary covering for $S_n$, which is impossible by \cite[Theorem 1]{Kantor}, or 
simply because this collection does not cover the primary elements 
acting fixed-point-freely. Therefore, $\sigma_0(S_n)> {n\choose n_2}$ and then
Lemma \ref{lem:S_n_trivial} completes the proof.
Note that in this situation a minimal primary covering is given by 
$\set{A_n}\cup\mathcal{X}_{n_2}$.
\end{proof}

Finally, assume now that $n=3\cdot 2^a$ for some $a\geq 0$.
The case $n=3$ is trivial, thus assume that $a\geq 1$. 
We first deal with the case $n=6$. 
\begin{lem}\label{lem:S_6}
The primary covering number for $S_6$ is $7$. A minimal primary covering 
is (a conjugate to) the following
$$\mathcal{C}=\set{A_6, X_1,X_1^{(12)},X_1^{(13)}, P_1, 
P_1^{(34)},P_1^{(35)}},$$
where $X_1=\Stab_{S_6}(\set{1})\in\mathcal{X}_1$ and 
$P_1=\seq{(3465),(123)(456)}$ belongs to the family $\mathcal{P}$ of primitive maximal subgroups isomorphic to $S_5$.
\end{lem}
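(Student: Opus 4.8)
Throughout, note that the primary elements of $S_6$ are precisely those of order $1,2,3,4,5$; the only elements of composite order are those of order $6$, of cycle type $(6)$ or $(3,2,1)$, and both of these classes consist of odd permutations. Hence every even permutation is primary, so $A_6$ already covers all even primary elements, and the primary elements lying outside $A_6$ are exactly the odd ones, forming the three classes $(2,1^4)$, $(2^3)$ and $(4,1^2)$, of sizes $15$, $15$ and $90$. For the upper bound it then suffices to check that, after $A_6$, the three point stabilizers $X_1,X_1^{(12)},X_1^{(13)}$ (fixing the points $1,2,3$) together with the three conjugates $P_1,P_1^{(34)},P_1^{(35)}$ of the transitive copy of $S_5$ cover these three classes. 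The point stabilizers cover every transposition (since $(ij)\in X_k$ whenever $k\notin\{i,j\}$, and $\{1,2,3\}\not\subseteq\{i,j\}$) and every $(4,1^2)$ element except the $18$ that fix a pair inside $\{4,5,6\}$; using the outer automorphism $\phi$ of $S_6$ one sees that a transitive $S_5$ contains no transposition but $10$ triple transpositions and $30$ four-cycles, and the three chosen conjugates are arranged precisely to cover the triple transpositions and the $18$ leftover four-cycles. This is a finite verification.

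For the lower bound I would first replace each member of a covering by a maximal subgroup containing it, and record, for each of the five proper maximal classes $\mathcal{X}_1,\mathcal{X}_2,\mathcal{W}_2,\mathcal{W}_3,\mathcal{P}$, how many elements of each odd class it contains. The automorphism $\phi$ swaps $\mathcal{X}_1\leftrightarrow\mathcal{P}$ and $\mathcal{X}_2\leftrightarrow\mathcal{W}_2$, fixes $\mathcal{W}_3$, and interchanges $(2,1^4)\leftrightarrow(2^3)$ while fixing $(4,1^2)$; this symmetry halves the bookkeeping and forces, for instance, $\phi(X_2)\in\mathcal{W}_2$, since $X_2$ contains seven transpositions but only three triple transpositions. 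The crucial feature is that only members of $\mathcal{X}_1$ and $\mathcal{P}$ contain as many as $30$ four-cycles, whereas $\mathcal{X}_2,\mathcal{W}_2$ contain only $6$ and $\mathcal{W}_3$ none; moreover $\mathcal{P}$ meets $(2,1^4)$ trivially and $\mathcal{X}_1$ meets $(2^3)$ trivially.

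Suppose first $A_6$ lies in the covering $\mathcal{C}$. Then the other members must cover the $120$ odd primary elements, and I would show this requires at least six of them, whence $|\mathcal{C}|\ge 7$. Counting four-cycles forces at least three members from $\mathcal{X}_1\cup\mathcal{P}$. If there are exactly three, they leave at least $18$ four-cycles uncovered (three point stabilizers miss precisely the $18$ four-cycles fixing a pair among the three unused points, and each $\mathcal{X}_2$-subgroup recovers only the six fixing one prescribed pair, with $\mathcal{W}_2,\mathcal{W}_3$ no better), so patching them needs three further subgroups; if there are four such members, the single remaining subgroup cannot meet both $(2,1^4)$ and $(2^3)$ in enough elements to finish; and five members from $\mathcal{X}_1\cup\mathcal{P}$ cannot meet both $(2,1^4)$ and $(2^3)$ at all. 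The $\phi$-symmetry collapses the transposition and triple-transposition subcases, and a short finite analysis excludes a total of five, giving $|\mathcal{C}|\ge 7$.

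Finally suppose $A_6\notin\mathcal{C}$ and $|\mathcal{C}|\le 6$. The $144$ elements of type $(5,1)$ lie only in members of $A_6$, $\mathcal{X}_1$ and $\mathcal{P}$, each meeting that class in exactly $24$ elements; so all six members lie in $\mathcal{X}_1\cup\mathcal{P}$ and must partition the $144$ five-cycles. But a transitive $S_5$ is transitive on $\{1,\dots,6\}$, so its five-cycles are spread evenly, four over each fixed point; hence every $\mathcal{P}$-member shares four five-cycles with each point stabilizer, so a mixture of the two types cannot be a partition, six point stabilizers miss all of $(2^3)$, and six transitive copies miss all of $(2,1^4)$ --- each impossible. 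Thus $|\mathcal{C}|\ge 7$ in all cases, and with the upper bound $\sigma_0(S_6)=7$. I expect the four-cycle bookkeeping in the case $A_6\in\mathcal{C}$ to be the main obstacle: one must verify that the $18$ four-cycles left by three high-capacity subgroups cannot be absorbed, together with the two order-two classes, by only two more subgroups.
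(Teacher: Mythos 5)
Your proposal is correct in substance and follows the same skeleton as the paper's proof: the same covering $\mathcal{C}$ for the upper bound (settled by a finite check; the paper delegates this to GAP), the identical argument forcing $A_6$ into any small covering via the $144$ five-cycles together with the intersection numbers $24$, $0$, $4$ and inclusion--exclusion, the same table of intersections of the five maximal classes with the odd classes $(2,1^4)$, $(2^3)$, $(4,1^2)$, and the same use of the outer automorphism to halve the bookkeeping. Where you genuinely diverge is the middle counting step, and the comparison is instructive. The paper proves the stronger claim that covering the $90$ four-cycles requires either at least three members of $\mathcal{X}_1$ or at least three members of $\mathcal{P}$ (via the bound $\abs{(4,1^2)\cap\left(S_1\cup S_2\cup P_1\cup P_2\right)}\leq 76$), after which the contradiction is immediate: three members of $\mathcal{P}$ leave only two subgroups to cover the $15$ transpositions, which the pairwise intersection numbers forbid, and the dual case follows from the outer automorphism. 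You instead prove the weaker claim that at least three members of $\mathcal{X}_1\cup\mathcal{P}$ are needed, which forces you into subcases according to whether there are exactly $3$, $4$, or $5$ such members. Your subcase of exactly three is sound, but your stated reason for excluding exactly four (that the single remaining subgroup cannot meet both $(2,1^4)$ and $(2^3)$ in enough elements) is not the operative obstruction in the mixed splits: with three point stabilizers and one primitive copy of $S_5$, all transpositions are already covered and only five triple transpositions are missing, which is fewer than the seven contained in a $\mathcal{W}_2$-subgroup, so pure counting on the order-two classes does not finish; what actually fails there is the four-cycle count (at least $12$ four-cycles remain uncovered against at most $6$ coverable by the last subgroup). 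This is an imprecision rather than a gap, since the data you have already assembled repairs it, but the paper's stronger disjunctive claim is exactly what lets it bypass these mixed subcases altogether, which is what that formulation buys.
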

\begin{proof}
A direct check with GAP shows that $\mathcal{C}$ is a covering for the set of 
primary elements of $S_6$.

Assume by contradiction that $\mathcal{D}$ is a primary covering (consisting of 
maximal subgroups of $G$ and) containing less than seven elements. 

We first show that $A_6\in\mathcal{D}$. If this is not the case, then the 
class $\Pi_0$ of $5$-cycles should be covered by at most six maximal subgroups, 
which are either $1$-point stabilizers, that is elements of $\mathcal{X}_1$, or 
primitive maximal subgroups, that is elements of $\mathcal{P}$, and in both cases 
they are all isomorphic to $S_5$. 
Note that $\abs{\Pi_0}=6\cdot 24$, and that for every $S_1\neq S_2\in \mathcal{X}_1$ and every $P_1\neq P_2\in \mathcal{P}$ we have:
\begin{itemize}
\item $\abs{\Pi_0\cap S_1}=\abs{\Pi_0\cap P_1}=24,$
\item $\abs{\Pi_0\cap S_1\cap S_2}=\abs{\Pi_0\cap P_1\cap P_2}=0,$ 
\item $\abs{\Pi_0\cap S_1\cap P_1}=4.$
\end{itemize}
(the second equation is trivial for the $1$-point 
stabilizers, and it holds for the members of $\mathcal{P}$ too, since there is an outer 
involutory automorphism of $S_6$ that interchanges $1$-point stabilizers with 
the members of $\mathcal{P}$). \\
By applying an inclusion/exclusion argument there are only two ways to 
cover $\Pi_0$ with no more than six of these proper subgroups, either using all of
the six elements of $\mathcal{X}_1$, or all of the elements of $\mathcal{P}$. 
It follows that $\mathcal{D}$ is either $\mathcal{X}_1$ or $\mathcal{P}$. 
In both cases we have a contradiction, since $\mathcal{X}_1$ does not cover 
the $2$-elements of type $\Pi_1=(2,2,2)$, while $\mathcal{P}$ does not cover 
the $2$-cycles. We proved therefore that $A_6\in\mathcal{D}$.

We set $\mathcal{D}_1=\mathcal{D}\setminus\set{A_6}$. 
The collection $\mathcal{D}_1$ consists of at most five subgroups, which 
should cover the set of odd $2$-elements, that is the set 
$\Pi_1\cup\Pi_2\cup\Pi_3$, where $\Pi_1=(2,2,2)$, $\Pi_2=(4,1,1)$ and $\Pi_3=(2,1,1,1,1)$. The following table shows
the sizes of the intersections of these classes with the maximal subgroups (different form $A_6$).

\begin{center}
\noindent
\begin{tabular}{|c|c|c|c|c|c|c|}
\hline
$\Pi_i$ & $\abs{\Pi_i}$ & $\abs{\Pi_i\cap\mathcal{X}_1}$ 
  & $\abs{\Pi_i\cap\mathcal{X}_2}$ & $\abs{\Pi_i\cap\mathcal{W}_3}$
  & $\abs{\Pi_i\cap\mathcal{W}_2}$  & $\abs{\Pi_i\cap\mathcal{P}}$\\
\hline
$(2,2,2)$     & $15$ & $0$ & $3$  & $6$ & $7$ & $10$ \\
$(4,1,1)$     & $90$ & $30$ & $6$ & $0$ & $6$ & $30$ \\  
$(2,1,1,1,1)$ & $15$ & $10$ & $7$ & $6$ & $3$ & $0$  \\
\hline
\end{tabular}
\end{center}

We claim that in order to cover the class $\Pi_2$ of $4$-cycles we need to take 
either at least three different elements of $\mathcal{X}_1$ or at least three 
different elements of $\mathcal{P}$. This comes from the fact that, 
for every $S_1\neq S_2\in \mathcal{X}_1$ and every 
$P_1\neq P_2\in \mathcal{P}$, the following holds
\begin{itemize}
\item $\abs{\Pi_2\cap S_1\cap S_2}=\abs{\Pi_2\cap P_1\cap P_2}=6,$
\item $\abs{\Pi_2\cap S_1\cap P_1}= 10,$
\item $\abs{\Pi_2\cap S_1\cap S_2\cap P_1}=\abs{\Pi_2\cap S_1\cap P_1\cap P_2}=2,$
\item $\abs{\Pi_2\cap S_1\cap S_2\cap P_1\cap P_2}\leq 2,$
\end{itemize}
hence
$$\abs{\Pi_2\cap \left(S_1\cup S_2\cup P_1\cup P_2\right)}\leq 76.$$ 
Assume that $\mathcal{D}_1$ contains three different elements of 
$\mathcal{P}$, then, by looking at  the last line of the Table, the class
$\Pi_3$ should be covered using just two different subgroups, say 
$A,B\in\mathcal{X}_1\cup\mathcal{X}_2\cup\mathcal{W}_3$. this is impossible, 
since:
\begin{itemize}
\item if $A,B\in \mathcal{X}_1$, then $\abs{\Pi_3\cap A\cap B}=6$, 
\item if $A\in \mathcal{X}_1$ and $B\in \mathcal{X}_2$, then $\abs{\Pi_3\cap A
\cap B}\geq 4$,
\item if $A\in \mathcal{X}_1$ and $B\in \mathcal{W}_3$, then 
$\abs{\Pi_3\cap A\cap B}= 4$.
\end{itemize}
The opposite case when $\mathcal{D}_1$ contains three different 
elements of $\mathcal{X}_1$, follows immediately by using the duality of the 
outer automorphism of order two of $S_6$ (or with similar arguments applied to 
the first line of the table). 
\end{proof}

\begin{lem} \label{lem:subsum}
Let $a_1,\ldots,a_r,a$ be integers with $0 \leq a_1 \leq \ldots \leq a_r$, $r \geq 2$ and $a \geq 1$.
\begin{enumerate}
    \item If $\sum_{i=1}^r 2^{a_i} = 2^a$ then there exists a subset $J \subseteq \{1,\ldots,r\}$ such that $\sum_{i \in J} 2^{a_i} = 2^{a-1}$.
    \item If $\sum_{i=1}^r 2^{a_i} = 3 \cdot 2^a$ then one of the following occurs.
    \begin{itemize}
        \item $r=2$, $a_1=a$, $a_2=a+1$.
        \item $r=3$, $a_1=a_2=a_3=a$.
        \item There exist two disjoint subsets $J_1,J_2 \subseteq \{1,\ldots,r\}$ such that $$\sum_{i \in J_1} 2^{a_i} = \sum_{i \in J_2} 2^{a_i} = 2^{a-1}.$$
    \end{itemize}
\end{enumerate}
\end{lem}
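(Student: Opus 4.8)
The plan is to isolate one combinatorial device and apply it to both parts. Given any finite multiset of powers of $2$, I would repeatedly \emph{merge} two equal entries $2^b,2^b$ into a single entry $2^{b+1}$, while recording for each current entry the set of original indices fused to produce it (its \emph{block}). Each merge strictly decreases the number of entries, so the process terminates; it stops precisely when all entries are distinct, that is, at the binary expansion of the total $N=\sum_i 2^{a_i}$ (which is unique). Throughout, an entry of value $2^b$ equals $\sum_{i\in B}2^{a_i}$ over its block $B$, and the blocks of distinct current entries are pairwise disjoint subsets of $\{1,\dots,r\}$. Hence whenever an entry equal to $2^b$ occurs at some stage, its block is a subset summing to $2^b$, and two entries present simultaneously give disjoint such subsets. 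This is the mechanism I would use to manufacture $J$, $J_1$, $J_2$.

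For (1), I would first note that since $r\geq 2$ and every entry is positive, no single $2^{a_i}$ can equal the total $2^a$, so every entry satisfies $2^{a_i}\leq 2^{a-1}$. Running the merge, the final state is the single entry $2^a$, whereas initially every entry is $<2^a$; therefore at some step an entry first becomes $\geq 2^a$. Such an entry arises by merging two equal entries, each $<2^a$ but with double $\geq 2^a$, and since the only power of $2$ in $[2^{a-1},2^a)$ is $2^{a-1}$, the two merged entries both equal $2^{a-1}$ and produce exactly $2^a$. Either of these two $2^{a-1}$-blocks is then a subset $J$ with $\sum_{i\in J}2^{a_i}=2^{a-1}$, as required.

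For (2), I would begin by bounding the entries: if some $a_i\geq a+2$ then $2^{a_i}\geq 4\cdot 2^a>3\cdot 2^a$; if $a_1\geq a+1$ then the two smallest entries already sum to at least $2^{a+2}>3\cdot 2^a$; and two entries equal to $2^{a+1}$ would sum to $2^{a+2}>3\cdot 2^a$. Hence $a_1\leq a$, $a_r\leq a+1$, and at most one entry equals $2^{a+1}$. Writing $A\in\{0,1\}$ for the number of entries equal to $2^{a+1}$, $B$ for the number equal to $2^a$, and $\sigma$ for the sum of the remaining entries (all $\leq 2^{a-1}$), one has $2A+B+\sigma/2^a=3$, so $k:=\sigma/2^a=3-2A-B$ is a nonnegative integer; the solutions of $2A+B=3$ are $(A,B)=(1,1)$ and $(A,B)=(0,3)$, and since $k=0$ forces $\sigma=0$ and hence $r=A+B$, these give exactly the two exceptional configurations $r=2,\ a_1=a,\ a_2=a+1$ and $r=3,\ a_1=a_2=a_3=a$. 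In every remaining case $k\geq 1$, so the entries of value $\leq 2^{a-1}$ have total $\sigma=k\cdot 2^a\geq 2^a$; I would apply the merge process to this sub-multiset alone. Because $\sigma$ is a multiple of $2^a$, its binary expansion uses only powers $\geq 2^a$, while every small entry starts below $2^a$, so some entry first reaches $\geq 2^a$ during the merge, and exactly as in (1) this first occurrence is a $2^a$ assembled from two $2^{a-1}$-blocks. These blocks are disjoint subsets $J_1,J_2$ with $\sum_{i\in J_1}2^{a_i}=\sum_{i\in J_2}2^{a_i}=2^{a-1}$, which is the third alternative.

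The one genuine obstacle is the situation in which all entries are already strictly below $2^a$, so that no entry equal to $2^a$ or $2^{a+1}$ can simply be read off: here it is not a priori clear that any subset sums to $2^a$ or to $2^{a-1}$, and a naive greedy selection can fail because of large gaps between the available powers of $2$. The merge-to-binary device is precisely what removes this difficulty uniformly, since it forces a $2^a$ to be built from two equal $2^{a-1}$ pieces exactly when the total is a multiple of $2^a$ but all parts are smaller. With that in hand, the remaining work is only the bookkeeping identity $2A+B+\sigma/2^a=3$ and the verification that $k=0$ characterises the two exceptional shapes, both of which are routine.
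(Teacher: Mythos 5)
Your proof is correct, and while it rests on the same arithmetic fact as the paper's proof---that two equal powers of $2$ merge into the next power, equivalently uniqueness of the binary expansion---it is organized genuinely differently, especially for item (2). The paper argues by contradiction via a minimal counterexample: for (1), merging two equal exponents would produce a smaller counterexample, so all exponents may be assumed distinct and at most $a-2$, making the total too small; for (2), it reduces to producing a single subset summing to $2^a$, which is then split by item (1) into the required pair $J_1,J_2$, the singleton cases of this splitting being exactly what produces the two exceptional configurations (a nested case analysis the paper leaves implicit behind the phrase ``it is enough to show''). You instead run the merging forward as an explicit algorithm with block bookkeeping, so the argument is constructive: the first entry to reach $2^a$ must be assembled from two disjoint $2^{a-1}$-blocks, which hands you $J$, respectively $J_1$ and $J_2$, directly. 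Your item (2) also avoids the reduction to a $2^a$-subset altogether: the counting identity $2A+B+k=3$ isolates the entries equal to $2^{a+1}$ and $2^a$, the two exceptional shapes appear transparently as the $k=0$ solutions, and for $k\geq 1$ the merge applied only to the small entries yields the third alternative. The paper's route is shorter; yours is constructive and makes the origin of the exceptional cases explicit rather than buried in an implicit double application of item (1).
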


\begin{proof}
Item (1). Assume that what we want to prove is false, and let $r$ be minimal with the property that $\sum_{i=1}^r 2^{a_i} = 2^a$ and there is no $J \subseteq \{1,\ldots,r\}$ with $\sum_{i \in J} 2^{a_i} = 2^{a-1}$. This implies $r \geq 3$. Note that since twice a power of $2$ is a power of $2$, we have $a_1 < \ldots < a_r \leq a-2$, hence 
$$2^a = \sum_{i=1}^r 2^{a_i} \leq \sum_{i=0}^{a-2} 2^i \leq 2^{a-1}-1,$$ 
a contradiction.

Item (2). Assume that $\sum_{i=1}^r 2^{a_i}=3 \cdot 2^a$. By item (1), to conclude it is enough to show that there exists $J \subseteq \{1,\ldots,r\}$ such that $\sum_{i \in J} 2^{a_i} = 2^a$, so suppose this is not the case, by contradiction. As for item (1), we may assume that $a_1 < \ldots < a_r$. We know that $a_r \leq a+1$ and $a_{r-1} \neq a$, therefore 
$$3 \cdot 2^a = \sum_{i=1}^r 2^{a_i} \leq \sum_{i=1}^{r-1} 2^{a_i} + 2^{a+1} \leq \sum_{i=0}^{a-1} 2^i + 2^{a+1} \leq 2^a-1+2^{a+1},$$ 
a contradiction.
\end{proof}

\begin{pro}\label{pro:main 32^a}
Let $n= 2^{a+1}+2^{a}=3\cdot 2^a$, with $a\geq 2$.
Then 
$$c_1\leq \sigma_0(S_n)\leq c_2,$$
where 
\begin{align*}
c_1 &=\begin{cases} 117 & \textrm{ if } a=2,\\
                 1+{n-1\choose 2^{a}-1} 
                 & \textrm{ if } a\geq 3,
      \end{cases}\\ 
c_2& = 2+{n-1\choose 2^{a}-1}+
\sum_{i=2}^{2^{a+1}}{n-i\choose 2^{a-1}-1}
\end{align*}
\end{pro}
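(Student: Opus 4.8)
The plan is to prove the two inequalities by rather different means: the upper bound $\sigma_0(S_n)\le c_2$ by exhibiting an explicit primary covering, and the lower bound $\sigma_0(S_n)\ge c_1$ (for $a\ge 3$) by an extremal estimate on a single conjugacy class of odd $2$-elements, the case $a=2$ being handled computationally.

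For the upper bound I would take the covering
\[
\mathcal{C}=\{A_n\}\cup\{\Stab(\{1,\ldots,2^{a-1}\})\}\cup\mathcal{A}\cup\mathcal{B},
\]
where $\mathcal{A}$ is the set of stabilizers of the $2^a$-subsets of $\Omega$ containing the point $1$ and $\mathcal{B}$ is the set of stabilizers of the $2^{a-1}$-subsets whose least element lies in $\{2,\ldots,2^{a+1}\}$. Since $|\mathcal{A}|=\binom{n-1}{2^a-1}$ and $|\mathcal{B}|=\sum_{i=2}^{2^{a+1}}\binom{n-i}{2^{a-1}-1}$, the size of $\mathcal{C}$ is exactly $c_2$. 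That $\mathcal{C}$ is a primary covering I would check as follows. As in Lemma \ref{lem:S_n_trivial}, $A_n$ absorbs every primary element of odd order together with every even $2$-element, so only the odd $2$-elements remain; such an element has an odd number of nontrivial cycles, each of $2$-power length. If the point $1$ does not lie on a cycle of length $2^{a+1}$, then Lemma \ref{lem:subsum}(1) lets me assemble a $g$-invariant $2^a$-subset through $1$, so $g$ is caught by a member of $\mathcal{A}$; if $1$ lies on a $2^{a+1}$-cycle then the remaining $2^a$ points carry a $2$-element, which by Lemma \ref{lem:subsum}(1) has an invariant $2^{a-1}$-subset, so $g$ is caught by a member of $\mathcal{B}$ (or by the reserved stabilizer $\Stab(\{1,\ldots,2^{a-1}\})$ in the boundary case where that invariant subset is forced to start at $1$). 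The delicate part of this half is precisely the bookkeeping showing that the constrained index range $\{2,\ldots,2^{a+1}\}$ for $\mathcal{B}$, together with the single reserved subgroup, really leaves no odd $2$-element uncovered.

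For the lower bound with $a\ge 3$ I would work with the class $\Pi$ of elements of cycle type $(2^a,2^a,2^a)$, which consists of odd permutations (three cycles), so $A_n\cap\Pi=\varnothing$. A double counting as in Proposition \ref{pro:S_n n=2power} gives $|\Stab(\Delta)\cap\Pi|=3|\Pi|/\binom{n}{2^a}$ for $\Delta$ of size $2^a$, because each element of $\Pi$ has exactly its three cycle-supports as invariant $2^a$-subsets. I would then show that this value is strictly larger than $|M\cap\Pi|$ for every other maximal subgroup $M$: for $M\in\mathcal{W}_{2^a}\cong S_{2^a}\wr S_3$ one computes $|M\cap\Pi|=(2^a-1)!^{\,3}$, which is smaller by a factor $\binom{2^{a+1}}{2^a}/2$, while for primitive $M$ the bound $|M|<3^n$ of Lemma \ref{lem:order_max_impr}(2) is already far below $|\Stab(\Delta)\cap\Pi|$ once $a\ge 3$, and the remaining intransitive and imprimitive cases are handled similarly. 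Hence every maximal subgroup meets $\Pi$ in at most $3|\Pi|/\binom{n}{2^a}$ elements, so covering $\Pi$ needs at least $\binom{n}{2^a}/3=\binom{n-1}{2^a-1}$ subgroups. To gain the extra $+1$ I would argue, exactly as at the end of Proposition \ref{pro:S_n, n_not_32^a}, that a minimal primary covering of that size must (by the strict maximality just proved) consist entirely of stabilizers of $2^a$-subsets forming a blocking family for the partitions into three $2^a$-blocks; but such a family omits certain primary elements, since for a prime $p$ with $2^{a+1}<p<n$ (which exists by Bertrand-type estimates plus inspection for small $a$) a $p$-cycle lies in $A_n$ but in no $2^a$-set stabilizer, whence $\sigma_0(S_n)\ge 1+\binom{n-1}{2^a-1}$.

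The main obstacle is that, unlike every other residue class of $n$, for $n=3\cdot 2^a$ there is no strongly definitely unbeatable family in the sense of Definition \ref{def:def_unbeat}: each element of $\Pi$ lies in three members of $\mathcal{X}_{2^a}$, so condition (2) fails and one is reduced to the covering-design estimate above, which only pins $\sigma_0$ between $c_1$ and $c_2$. This same degeneracy is fatal in the smallest case $a=2$, that is $n=12$, where the large primitive groups ($M_{12}$, $M_{11}$, $\PGL_2(11)$, \dots) beat the intransitive stabilizers on $(4,4,4)$ and destroy the extremal bound; there I would instead obtain $\sigma_0(S_{12})\ge 117$ by the integer-programming method of Lemma \ref{lem:S_10}, using \cite{gap} to list the maximal subgroups containing each element of a governing class and \cite{gurobi} to solve the resulting minimum set-cover.
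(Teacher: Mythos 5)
Your lower-bound half is in substance the paper's own argument, but your upper-bound half contains a genuine error: the family $\mathcal{C}$ you propose is \emph{not} a primary covering. Take $g=(1\,2\,\cdots\,2^{a+1})$, a single $2^{a+1}$-cycle with support $\{1,\ldots,2^{a+1}\}$ and all points of $\{2^{a+1}+1,\ldots,n\}$ fixed (any odd $2$-element whose $2^{a+1}$-cycle has support exactly $\{1,\ldots,2^{a+1}\}$ works). Then $g\notin A_n$; every $g$-invariant set containing $1$ contains the whole $2^{a+1}$-cycle, so $g$ lies in no member of $\mathcal{A}$ and not in your reserved subgroup $\Stab(\{1,\ldots,2^{a-1}\})$; and every $g$-invariant $2^{a-1}$-set consists of fixed points, hence lies inside $\{2^{a+1}+1,\ldots,n\}$ and has least element $>2^{a+1}$, so $g$ lies in no member of $\mathcal{B}$. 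This is exactly the configuration that the paper's extra subgroup $\Stab(\{2^{a+1}+1,\ldots,n\})$ --- the stabilizer of the \emph{tail} $2^a$-set, not of an initial $2^{a-1}$-set --- is designed to catch: when both invariant $2^{a-1}$-sets supplied by Lemma \ref{lem:subsum}(2) avoid $1$ and have least element $>2^{a+1}$, their union is the whole tail. (Your reserved subgroup is in fact never needed: if an invariant $2^{a-1}$-set contains $1$, its union with the disjoint partner is a $2^a$-set through $1$, already handled by $\mathcal{A}$.) A smaller point: your first branch, that $1$ lies in some $g$-invariant $2^a$-set whenever it is not on a $2^{a+1}$-cycle, is true but is not a consequence of Lemma \ref{lem:subsum}(1) as you claim; it needs the trichotomy of part (2) plus a further step in the case where both $2^{a-1}$-subpartitions avoid the cycle of $1$.

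For the lower bound you use the same class $\Pi=(2^a,2^a,2^a)$ and the same extremal counting as the paper (which packages it as definite unbeatability of $\mathcal{M}_1$, the stabilizers of $2^a$-sets through the point $1$; restricting to sets containing $1$ is what restores condition (2) of Definition \ref{def:def_unbeat}), so the plan is sound, but two supporting claims are wrong. First, for $M\cong S_{2^a}\wr S_3$ one has $|M\cap\Pi|=7\,((2^a-1)!)^3$, not $((2^a-1)!)^3$: you forgot the elements inducing a transposition on the three blocks (one block carrying a $2^a$-cycle, the other two interchanged by two $2^a$-cycles); the comparison survives only because $\frac12\binom{2^{a+1}}{2^a}>7$. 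Second, and crucially, the case you dismiss as ``handled similarly'' --- $M\in\mathcal{W}_{n/2}\cong S_{3\cdot 2^{a-1}}\wr S_2$ --- is the crux of the whole proposition: by Lemma \ref{lem:S_n_impr} its ratio against a $2^a$-set stabilizer is $\frac{8}{3}\binom{6t}{2t}/\binom{6t}{3t}$ with $t=2^{a-1}$, which is $<1$ precisely when $a\geq 3$. For $a=2$ these subgroups \emph{beat} the $4$-set stabilizers ($10800$ versus $7560$ elements of type $(4,4,4)$); that, and not the primitive groups $M_{12}$, $M_{11}$, $\PGL_2(11)$ (whose intersections with $(4,4,4)$ are far below $7560$), is why $n=12$ is exceptional. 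It is also why no integer programming is needed there: the paper gets $c_1=117$ from $A_{12}$ plus the count $12!/(8\,(6!)^2)=115{,}5$, so at least $116$ subgroups are needed for $(4,4,4)$. Finally, your prime $p$ with $2^{a+1}<p<3\cdot 2^a$ requires a prime in an interval of ratio $3/2$, which plain Bertrand does not give; the clean route to the ``$+1$'' is the one used in Proposition \ref{pro:S_n, n_not_32^a}: a family consisting only of $2^a$-set stabilizers lies in a single conjugacy class of subgroups and hence cannot cover $G_0$, by \cite[Theorem 1]{Kantor}.
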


\begin{proof}
To prove the upper bound we consider the collection
$$\mathcal{C}_2=\bigcup_{i=0}^{m}\mathcal{M}_i,$$
where $m=2^{a+1}+1$ and
\begin{align*}
\mathcal{M}_0&=\set{A_n},\\
\mathcal{M}_1&=\set{\Stab_{S_n}(U)\vert 1\in U, \, \abs{U}=2^a},\\
\mathcal{M}_i&=\set{\Stab_{S_n}(V)\vert i\in V\subset\set{i,\ldots, n},\, 
\abs{V}= 2^{a-1}}\!,\, 
\textrm{for } i=2,\ldots, 2^{a+1},\\
\mathcal{M}_{m}&=\set{\Stab_{S_n}(\set{m, \ldots ,n})}.
\end{align*}
The primary elements of odd order as well as the $2$-elements that are even
permutations are covered by $A_n$. 
Let $g$ be an odd $2$-element. If $g\in (2^a,2^a,2^a)$ then $g$ is covered by 
a unique subgroup in $\mathcal{M}_1$. Otherwise, since $g$ is an odd permutation, by Lemma \ref{lem:subsum} there are at least two disjoint subsets of cardinality $2^{a-1}$ on which $g$ acts. If one of these two contains the point $1$, then again $g$ is covered by 
a unique element of $\mathcal{M}_1$, 
otherwise $g$ lies in the stabilizer of a subset of size $2^{a-1}$ and 
not containing 1.
In this case $g$ is covered by a subgroup that lies in 
$\bigcup_{i=2}^m\mathcal{M}_i$. Since $\abs{\mathcal{C}_2}=c_2$ the 
upper bound is proved.\\

We prove now that $c_1$ is a lower bound for $\sigma_0(S_n)$.\\
Assume first that $a=2$, that is $n=12$. Arguing in a similar way as the cases $n=5,6$ or $10$, it is straightforward to 
prove that the alternating group $A_{12}$ belongs to every minimal primary 
covering. Now, the maximal subgroups that contain most elements of type
$\Pi_1=(4,4,4)$ are the ones in $\mathcal{W}_6$, each of which, by Lemma \ref{lem:S_n_impr} contains exactly $8\abs{\Pi_1}(6!)^2/12!$ such
elements. Since $12!/8(6!)^2=115,5$ we conclude that a minimal primary
covering has at least $117$ elements.

Let $a\geq 3$. We prove that $c_1$ is a lower bound for $\sigma_0(S_n)$ by 
showing that the collection $\mathcal{M}_1$ is definitely unbeatable on 
$\Pi_1=(2^a,2^a,2^a)$. 

Conditions (1) and (2) of Definition \ref{def:def_unbeat}
follow immediately. Let us prove condition (3). Note that the only maximal 
subgroups $M$ having nontrivial intersection with $\Pi_1$ are 
either stabilizers of a set of cardinality $2^a$, or imprimitive, or 
proper primitive maximal subgroups, that is 
elements of $\mathcal{W}\cup\mathcal{P}$. For such $M$ we define 
$c(M):=|M \cap \Pi_1|/|M_1 \cap \Pi_1|$ for $M_1 \in \mathcal{M}_1$, and we will 
prove that $c(M) \leq 1$. \\
In the first case we have that 
$$\abs{M\cap \Pi_1}=\abs{M_1\cap \Pi_1}=\frac{(2^a!)(2^{a+1}!)}{2^{3a+1}},$$
for every $M_1\in\mathcal{M}_1$ and every $M=\Stab_{S_n}(U)$, $1\not\in U$, $
\abs{U}=2^a$. Therefore $c(M)=1$. \\
Assume $M\in\mathcal{W}_{n/2}$. Then by Lemma \ref{lem:S_n_impr},
$$\abs{M\cap \Pi_1}=\frac{4\abs{\Pi_1}\abs{W_{n/2}}}{n!}=\frac{4}{3\cdot 2^{3a}}
\left((n/2)!\right)^2,$$
and therefore
\begin{align*}
c(M)&=\frac{8}{3}\cdot \frac{((n/2)!)^2}{(2^{a+1}!)(2^a!)}=\frac{8}{3}\cdot 
\frac{((2^a+2^{a-1})!)^2}{(2^{a+1})!(2^a)!}\\
    &=\frac{8}{3}\cdot \frac{(3t!)^2}{(4t)!(2t)!}=\frac{8}{3}\cdot \frac{{6t
    \choose 2t}}{{6t\choose 3t}},
\end{align*}
where $t=2^{a-1}$. Therefore $c(M)< 1$ for every $t \geq 4$, that is for 
every $a\geq 3$.  

Assume $M\in\mathcal{W}_{d}$, with $d\neq n/2$. Then 
$\abs{M\cap \Pi_1}< \abs{M}\leq \abs{W_{n/3}}$ by Lemma 
\ref{lem:order_max_impr}, therefore 
\begin{align*}
c(M)& < \frac{\abs{W_{n/3}}}{\abs{M_1\cap \Pi_1}}=\frac{(n/3!)^3\cdot 6\cdot 
2^{3a+1}}{(2^a!)(2^{a+1}!)}\\
&< \frac{2^{3a+4}}{{2^{a+1}\choose 2^a}}\leq \frac{2^{3a+4}(2^{a+1}+1)}{2^{2^{a
+1}}}<\frac{2^{4a+6}}{2^{2^{a+1}}}\leq 1,
\end{align*}
for every $a\geq 4$. The case $a=3$ can be checked directly. 

Finally assume that $M\in\mathcal{P}$. If $a\geq 3$, then $\abs{M}\leq 2^n$ 
by \cite{Maroti2002}, hence
$$c(M)< \frac{\abs{M}}{\abs{M_1\cap \Pi_1}}<
\frac{2^{n+3a+1}}{(2^{a+1}!)(2^a!)}<1.$$ 
The proof is completed.
\end{proof}

\textbf{Acknowledgments.} The authors are grateful to the referee for 
completing the calculations of $\sigma_0(S_{10})$ using  \cite{gap,gurobi}.

\end{document}